\newcommand{\eps}{\varepsilon}
\renewcommand{\le}{\leqslant}
\renewcommand{\ge}{\geqslant}
\newcommand{\ff}{\ensuremath{\mathcal{F}}}
\newcommand{\E}{\mathsf{E}}
\newcommand{\D}{\mathsf{Var}}
\newcommand{\Prb}{\mathsf{P}}
\newtheorem{thm}{Theorem}
\newtheorem{prob}[thm]{Problem}
\newtheorem{lemma}[thm]{Lemma}
\newtheorem{prop}[thm]{Proposition}
\newtheorem{conj}{Conjecture}
\title{Sharp bounds for the chromatic number of random Kneser graphs}
\author{Sergei Kiselev\footnote{MIPT, Moscow; Email: {\tt kiselev.sg@gmail.com}}, Andrey Kupavskii\footnote{IAS Princeton; CNRS, France; MIPT, Moscow; Email: {\tt kupavskii@yandex.ru} \ \ The authors acknowledge the financial support from the Russian Government in the framework of MegaGrant no 075-15-2019-1926.}}
\date{}
\begin{document}
\maketitle

\begin{abstract} Given positive integers $n\ge 2k$, the {\it Kneser graph} $KG_{n,k}$ is a graph whose vertex set is the collection of all $k$-element subsets of the set $\{1,\ldots, n\}$, with edges connecting pairs of disjoint sets. One of the classical results in combinatorics, conjectured by Kneser and proved by Lov\'asz, states that the chromatic number of $KG_{n,k}$ is equal to $n-2k+2$. In this paper, we study the chromatic number of the {\it random Kneser graph} $KG_{n,k}(p)$, that is, the graph obtained from $KG_{n,k}$ by including each of the edges of $KG_{n,k}$ independently and with probability $p$.

We prove that, for any fixed $k\ge 3$, $\chi(KG_{n,k}(1/2)) = n-\Theta(\sqrt[2k-2]{\log_2 n})$, as well as $\chi(KG_{n,2}(1/2)) = n-\Theta(\sqrt[2]{\log_2 n \cdot \log_2\log_2 n})$. We also prove that, 
for 
$k\ge (1+\varepsilon) \log\log n$, we have $\chi(KG_{n,k}(1/2))\ge n-2k-10$. This significantly improves previous results on the subject, obtained by Kupavskii and by Alishahi and Hajiabolhassan. The bound on $k$ in the second result is also tight up to a constant. We also discuss an interesting connection to an extremal problem on embeddability of complexes.

\end{abstract}

\section{Introduction}

For positive integers $n,k$, where $n\ge 2k$, the {\it Kneser graph} $KG_{n,k}=(V,E)$ is the graph, whose vertex set $V$ is the collection of all $k$-element subsets of the set $[n]:=\{1,\ldots, n\}$, and $E$ is the collection of the pairs of disjoint sets from $V$. This notion was introduced by Kneser \cite{Knes}, who showed that $\chi(KG_{n,k})\le n-2k+2$. He conjectured that, in fact, equality holds in this inequality. This was proved by Lov\'asz \cite{Lova}, who  introduced the use of topological methods in combinatorics in that paper. One of the motivations for the studies in the present paper is to develop tools that could potentially help to obtain a `robust' combinatorial proof of Kneser's conjecture.

We remark that independent sets in $KG_{n,k}$ are {\it intersecting families}, and it is a famous result of Erd\H os, Ko and Rado \cite{EKR} that $\alpha(KG_{n,k}) = {n-1\choose k-1}$.

The notion of the {\it random Kneser graph} $KG_{n,k}(p)$ was introduced in \cite{BGPR,BGPR2}. For $0<p< 1$, the graph $KG_{n,k}(p)$ is constructed by including each edge of $KG_{n,k}$ in $KG_{n,k}(p)$ independently with probability $p$. 
The authors of \cite{BNR} studied the independence number of $KG_{n,k}(p)$. Later, their results were strengthened in \cite{BBN,DT,DK}). Interestingly, the independence number of $KG_{n,k}(p)$ stays {\it exactly} the same as the independence number of $KG_{n,k}$ in many regimes. Independence numbers of random subgraphs of generalized Kneser graphs and related questions were studied in \cite{BKR, BGPR, BGPR2, Pyad, PR}. In \cite{Kup}, the second author  proposed to study the chromatic number of $KG_{n,k}(p)$. 
He proved that in various regimes the chromatic number of the random Kneser graph is very close to that of the Kneser graph. In particular, he showed that for any constant $k$ and $p$ there exists a constant $C$, such that a.a.s. (asymptotically almost surely)\footnote{with probability tending to $1$ as $n\to \infty$.}
\begin{equation}\label{eq01}\chi(KG_{n,k}(p))\ge n- C n^{\frac 3{2k}}.\end{equation}
Moreover, he showed that the same holds for the random Schrijver graph (defined analogously based on Schrijver graphs, cf. \cite{Kup}). A better a.a.s. bound $\chi(KG_{n,k}(p))\ge n- C n^{\frac {2+o(1)}{2k-1}}$ was next obtained by Alishahi and Hajiabolhassan \cite{AH}. In a follow-up paper, the second author \cite{Kup2} improved the inequality \eqref{eq01} to
\begin{equation}\label{eq02} \chi(KG_{n,k}(p))\ge n- C(n\log n)^{1/k}\end{equation}
for some $C=C(p,k)$.
The main result of this paper is the following theorem, which, in particular, significantly improves upon the bounds \eqref{eq01} and \eqref{eq02} and settles the problem in the case of constant $k$.
\begin{thm}\label{thm1}
    For any fixed $k\ge 3$ and $n\to\infty$, we a.a.s.  have
    $$
    \chi(KG_{n,k}(1/2)) = n - \Theta\big(\sqrt[2k - 2]{\log_2 n}\big).
    $$
    
    For $k=2$ and $n\to\infty$ we a.a.s. have
    $$
    \chi(KG_{n,k}(1/2)) = n - \Theta\big(\sqrt[2]{\log_2 n \cdot \log_2\log_2 n}\big).
    $$
\end{thm}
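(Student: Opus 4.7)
The theorem states matching upper and lower bounds on $\chi(KG_{n,k}(1/2))$, and each direction requires a separate argument.

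\textbf{Lower bound.} The plan is a first-moment union bound. Let $c=n-2k+2-r$ denote a target upper bound on the chromatic number. For each map $\phi\colon\binom{[n]}{k}\to[c]$ let $e(\phi)$ denote the number of pairs of disjoint $k$-sets assigned the same color by $\phi$. Then $\E[\text{number of proper }c\text{-colorings of }KG_{n,k}(1/2)]=\sum_\phi 2^{-e(\phi)}$, and to deduce that no proper $c$-coloring exists a.a.s.\ it suffices to show this expectation is $o(1)$. The naive bounds $|\{\phi\}|\le c^{\binom{n}{k}}$ and $e(\phi)\ge 1$ (the latter from Lov\'asz's theorem) are far too weak, so one must stratify colorings by proximity to the canonical Lov\'asz coloring and bound $e(\phi)$ from below on each stratum. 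The technical heart is a quantitative version of Lov\'asz's theorem: every $c$-coloring of $\binom{[n]}{k}$ with deficit $r$ has $e(\phi)\gtrsim n^{k}\cdot r^{2k-2}$ monochromatic Kneser edges. Plugging this into the union bound and rearranging produces the exponent $1/(2k-2)$.

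\textbf{Upper bound.} The plan is to exhibit an explicit coloring using $n-\Omega((\log n)^{1/(2k-2)})$ colors a.a.s. Choose a small window $W\subseteq[n]$ of size $|W|\asymp(\log n)^{1/(2k-2)}$, color every $k$-subset $S\not\subseteq W$ by $\min(S\setminus W)$ (using $n-|W|$ colors), and handle the $k$-subsets contained in $W$ via the induced random subgraph $KG_{|W|,k}(1/2)$. Since $|W|$ is only polylogarithmic in $n$, this induced graph has polylogarithmically many vertices and edges; a careful moment calculation shows that $\chi(KG_{|W|,k}(1/2))<\chi(KG_{|W|,k})=|W|-2k+2$ with probability at least inverse-polynomial in $n$, and amplifying over $\Theta(n/|W|)$ disjoint translates of $W$ promotes this to $1-o(1)$. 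Summing the colors used inside and outside the successful window yields the claimed bound.

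\textbf{Main obstacle and the $k=2$ case.} The hardest step is the quantitative stability version of Lov\'asz's theorem underlying the lower bound --- establishing $e(\phi)\gtrsim n^{k}\cdot r^{2k-2}$ with the correct exponent $2k-2$ on the deficit. I would approach it by combining Schrijver's reduction to stable $k$-subsets with a Tucker-lemma argument that tracks many antipodal simplices simultaneously; the exponent $2k-2$ should emerge from the dimension of the critical simplicial complex associated with a $(2k-2+r)$-element sub-window. For $k=2$ the naive counting is borderline and misses the sharp threshold by a logarithmic factor; the extra $\sqrt{\log_2\log_2 n}$ in the statement is recovered either by grouping colorings into symmetry classes or by iterating the stability argument, ultimately yielding the sharper $n-\Theta(\sqrt{\log_2 n\cdot\log_2\log_2 n})$ bound.
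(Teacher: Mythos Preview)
Both halves of your plan contain genuine gaps.

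\textbf{Lower bound.} Your first-moment argument hinges on the stability estimate $e(\phi)\gtrsim n^{k}r^{2k-2}$, but this is simply false: the minimum number of monochromatic edges does not grow with $n$. Take $m=2k+r-1$, use $n-m$ stars centred at the elements of $[n]\setminus[m]$, and colour all of $\binom{[m]}{k}$ with the single remaining colour. This uses $n-2k+2-r$ colours, yet every monochromatic edge lies inside $\binom{[m]}{k}$, so $e(\phi)\le\binom{2k+r-1}{k}^2=O_k(r^{2k})$, independent of $n$. Since the number of colourings is $2^{\Theta(n^{k}\log n)}$, no bound on $e(\phi)$ that is polynomial in $r$ alone can drive the union bound to $o(1)$. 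This is exactly why the paper does \emph{not} do a global first-moment argument: it first proves a structural lemma (Lemma~\ref{lem2}) that in any proper colouring of $KG_{n,k}(1/2)$ some colour is a substar, and iterates this to peel off $n-h_0$ star colours, reducing the whole problem to a window of size $h_0\asymp(\log n)^{1/(k-1)}$. Only \emph{then} is a union bound (Lemma~\ref{lem3}) over colourings of this tiny window feasible, because the number of colourings is now $2^{O(h_0^{k}\log h_0)}$ rather than $2^{\Theta(n^{k}\log n)}$. The exponent $2k-2$ comes out of Proposition~\ref{prop2}, a purely combinatorial case analysis on colourings of the small window, not from any topological argument.

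\textbf{Upper bound.} As written, your window argument saves only $O(1)$ colours. You use $n-|W|$ stars outside $W$ and then claim $\chi(KG_{|W|,k}(1/2))<|W|-2k+2$; even granting this, the total is at most $(n-|W|)+(|W|-2k+1)=n-2k+1$, a constant saving independent of $|W|$. To reach $n-\Theta((\log n)^{1/(2k-2)})$ you would need the induced graph on $W$ to be $O(1)$-colourable, which is far stronger and not inverse-polynomially likely. The paper's construction is different: it chooses an \emph{ordered} family $\mathcal A=(A_1,\ldots,A_l)$ of disjoint $2$-sets with $l\asymp(\log n)^{1/(2k-2)}$, defines a specific nested edge set $E(\mathcal A)$ whose cardinality is $|E(\mathcal A)|=O(l^{2k-1})$, and uses a second-moment (Chebyshev) argument over all choices of $\mathcal A$ to show that a.a.s.\ some such $E(\mathcal A)$ is entirely absent from $KG_{n,k}(1/2)$. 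The nested structure is what makes each $V_i(\mathcal A)$ independent once $E(\mathcal A)$ vanishes, yielding a proper $(n-l)$-colouring; a plain window has no such layering.
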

For clarity, all our results are stated and proved for $p=1/2$. However, they are easy to extend to any constant or slowly decreasing $p$.
For $k=1$, $KG_{n,k}$ is just the complete graph $K_n$, and thus $KG_{n,1}(p) = G(n,p)$. Therefore, we a.a.s. have $\chi(KG_{n,1}(p))=\Theta(\frac n{\log n})$ (see, e.g., \cite{AS}), that is, an analogue of Theorem~\ref{thm1} cannot hold. We note that a weaker version of Theorem~\ref{thm1} was announced in the short note due to the first author and Raigorodskii \cite{KR}.

The papers \cite{Kup}, \cite{AH}, \cite{Kup2} were also concerned with the following question: when does the chromatic number drop by at most an additive constant factor? The best results here are due to the second author \cite{Kup2}, who proved the following a.a.s. bound for any fixed $l\ge 2$ and some absolute constant $C=C(l)$:
\begin{equation}\label{eqgrowk}\chi(KG_{n,k}(1/2))\ge n-2k+2-2l \ \ \ \ \textit{if }\ \ \ k \ge C(n\log n)^{1/l}.
\end{equation}

In this paper, we provide a major improvement of \eqref{eqgrowk}, replacing the polynomial dependence of $k$ on $n$ by doubly logarithmic.
\begin{thm}\label{thm3} For any $\eps>0$ and $n\to \infty$ we a.a.s. have
\begin{equation}\label{eqgrowknew}\chi(KG_{n,k}(1/2))\ge n-2k-10 \ \ \ \ \textit{if }\ \ \ k  \ge (1+\eps)\log_2\log_2 n.
\end{equation}
\end{thm}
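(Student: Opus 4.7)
The plan is to argue by contradiction via a union bound over hypothetical ``bad'' colorings. Suppose that $\chi(KG_{n,k}(1/2)) \le n-2k+1-2l$; then there exists a proper coloring $\varphi \colon V(KG_{n,k}) \to [c]$ with $c = n-2k+1-2l$ colors such that no edge of $KG_{n,k}(1/2)$ is monochromatic under $\varphi$, and the goal is to show that a.a.s.\ no such $\varphi$ exists.

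The core ingredient is a quantitative structural lemma refining the Dol'nikov-Kriz / Schrijver method used in \cite{Kup2}. The target is the following statement: in any coloring of $V(KG_{n,k})$ with at most $n-2k+1-2l$ colors, some color class $\ff_i$ must contain $M = \Omega(k^{2l-2})$ pairs of disjoint $k$-sets, all supported on a ``base set'' of only $O(kl)$ elements of $[n]$. The exponent $2l-2$ is precisely what matches the hypothesis: combined with a union bound over base sets, whose logarithm grows like $kl \log n$, it yields the threshold $k \asymp \log^{1/(2l-3)} n$.

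Granted such a lemma, the probabilistic step is routine. A canonicalization (shifting) argument first lets us restrict attention to configurations whose base set is an initial segment of $[n]$; the number of relevant candidate $\varphi$ is then bounded by $\binom{n}{O(kl)} \le n^{O(kl)}$. For each fixed such $\varphi$, the probability that all $M$ monochromatic edges identified by the structural lemma are absent in $KG_{n,k}(1/2)$ is at most $2^{-M} = 2^{-\Omega(k^{2l-2})}$. Hence
\begin{equation*}
\Prb[\exists \text{ bad } \varphi] \le n^{O(kl)} \cdot 2^{-\Omega(k^{2l-2})} = \exp\bigl(O(kl \log n) - \Omega(k^{2l-2})\bigr),
\end{equation*}
which tends to $0$ whenever $k \ge C(l) \log^{1/(2l-3)} n$ for a sufficiently large constant $C(l)$.

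The main obstacle is, by far, the structural lemma: producing a polynomially large number ($\Omega(k^{2l-2})$) of monochromatic disjoint pairs within a single color class whenever one colors $V(KG_{n,k})$ with only $n-2k+1-2l$ colors. This is precisely the point where the connection to embeddability of complexes announced in the abstract enters: a topological obstruction to embedding certain $(l-1)$-dimensional simplicial complexes into Euclidean space of dimension $2l-3$ should supply the qualitative fact that ``too few'' colors force a rich monochromatic configuration, and the quantitative content of this obstruction must be extracted to yield the required $k^{2l-2}$ count. The hypothesis $l \ge 6$ presumably reflects the dimension threshold at which the relevant non-embeddability results begin to apply in the quantitatively strong form needed here, while the various ``near-boundary'' cases (in which the coloring barely fails to attain the Lov\'asz bound) will require careful bookkeeping.
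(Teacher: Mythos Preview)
Your proposal has a genuine gap: the ``quantitative structural lemma'' on which everything hinges --- that any $(n-2k+1-2l)$-coloring of $V(KG_{n,k})$ forces some color class to contain $\Omega(k^{2l-2})$ disjoint pairs supported on $O(kl)$ elements --- is not a known theorem. It is essentially the content of Conjecture~\ref{conj1} (and its topological strengthening Conjecture~\ref{conj2}) in this very paper, which the authors explicitly state as open. The connection to non-embeddability of complexes that you invoke is discussed in Section~\ref{sec21} precisely as a \emph{conjectural} route; the paper explains in Section~\ref{sec5} that, \emph{if} Conjecture~\ref{conj1} held, one could push the bound all the way down to $k \ge C\log\log n$. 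So your scheme, as written, is a proof sketch conditional on an open problem, not a proof of Theorem~\ref{thm3}.

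The paper's actual argument circumvents this obstacle by a two-stage reduction. First, Lemma~\ref{lem2} (a delicate ``star-peeling'' plus imprint-counting argument) shows that a.a.s.\ any proper coloring of $KG_{n,k}(1/2)$, when restricted to successively smaller ground sets, must shed one star-shaped color at each step; this reduces the problem to analyzing an induced subgraph on only $h_0 = k^{5.1}$ elements. Second, on this small ground set one applies not your hoped-for lemma but a \emph{proven} bipartite-structure lemma from \cite{Kup2} (Lemma~\ref{Kup_bipartite} here): any $(h_0-2k-2l+1)$-coloring of $KG_{h_0,k}$ produces a specific monochromatic complete bipartite subgraph, whose part sizes are controlled by binomial coefficients like $\binom{k+l}{l}$. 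The union bound is then over pairs $(A_1,A_2)$ of subsets of $[n]$ (not over colorings), and the dominant term is $2^{2(k+l)\log_2 n - \Theta(\binom{k+l}{l}^2/(k+l)^2)}$, which indeed gives the $k^{2l-3} \gg \log n$ threshold. The constraint $l\ge 6$ is not a topological dimension threshold at all: it arises because the second sum $\Prb_2$ in the proof of Lemma~\ref{lem3_growk} requires $k^l \gg h_0 \log k = k^{5.1}\log k$, forcing $l\ge 6$.

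A secondary issue: your union bound ``the number of relevant candidate $\varphi$ is bounded by $\binom{n}{O(kl)}$'' is not coherent as stated. There are vastly more colorings than that; what one can hope to union-bound over is the choice of the small witness configuration (the base set together with the two families of $k$-sets inside it), and that is exactly what the paper does --- but only after the reduction of Lemma~\ref{lem2} makes the witness description short enough.
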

The bound on $k$ is tight up to a constant: in  \cite[Section~5]{Kup2} it is proved that the bound $\chi(KG_{n,k}(1/2))\ge n-2k+2-2l$ cannot hold for any fixed $l$ if $k<(\frac 12 -\eps)\log_2 \log_2 n.$\\

\textbf{Remark. } In an earlier version of this paper, we showed the bound $\chi(KG_{n,k}(1/2))\ge n-2k+2-2l$ for $k  \ge C\log^{\frac 1{2l-3}} n.$ We also related this problem  to certain extremal properties of complexes. See Section~\ref{sec21} for the statement of the problem concerning simplicial complexes and the discussion section for the explanation of the relationship between the two problems. Since then, Kaiser and Stehl\'{\i}k provided an important construction of a sparse subgraph of Kneser graphs, which lead to the present improvement. We discuss their construction in Section~\ref{secxg}. We also note that the best possible constant in front of the double log is directly related to the value of $\zeta$ from Problem~\ref{pr2}.\vskip+0.1cm





Generalizing the notion of a Kneser graph, Alon, Frankl and Lov\'asz in \cite{AFL} studied the {\it Kneser hypergraph} $KG_{n,k}^r$. The vertex set of the Kneser hypergraph is the same as that of the Kneser graph $KG_{n,k}$, and the set of edges is formed by the $r$-tuples of pairwise disjoint sets. In particular, verifying a conjecture of Erd\H os \cite{Erd1}, they determined that $\chi(KG_{n,k}^r) = \big\lceil\frac{n-r(k-1)}{r-1}\big\rceil$. Determining the independence number of $KG_{n,k}^r$ is a much harder problem due to Erd\H os \cite{E} (known under the name of the Erd\H os Matching Conjecture), which remains unresolved in full generality. The best known results in this direction were obtained in \cite{F4} and, more recently, in \cite{FK13}. See also \cite{EKL, FK6, FK7} for related stability results.

We can define the {\it random Kneser hypergraph} $KG_{n,k}^r(p)$ in a similar way. Studying the chromatic number of $KG_{n,k}^r(p)$ was proposed by the second author in \cite{Kup}. First lower bounds were obtained in \cite{AH} and then they were significantly improved in \cite{Kup2}. In particular, it was shown that for fixed $k$, $p$ and $r\ge 3$ there exists a constant $C$, such that a.a.s.
\begin{equation}\label{eq03}\chi(KG_{n,k}^r(p))\ge \frac {n}{r-1}- C \log^{\frac 1{\alpha}} n,\end{equation}
where $\alpha = r(k-1)-\frac {2r-1}{r-1}$.
Note that $\chi(KG_{n,k}^r) = \frac{n}{r-1} - O(1)$ for fixed $k,r$. In the third theorem, we give an almost matching upper bound for \eqref{eq03}.

\begin{thm}\label{thm2}
    Let $k, r$ be fixed. Then there exists a constant $C$, such that for $n\to\infty$ a.a.s. we have
    $$
    \chi(KG^r_{n,k}(1/2)) \le \frac{n}{r-1}- C \log^{\frac{1}{r(k-1)}} n.
    $$
\end{thm}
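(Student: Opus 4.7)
The plan is to exhibit, with probability $1-o(1)$, a coloring of $KG^r_{n,k}(1/2)$ saving $L = \lfloor c (\log n)^{1/(r(k-1))}\rfloor$ colors over the standard chromatic number, where $c$ is a small constant. I work from the standard proper coloring of $KG^r_{n,k}$: partition $[n]$ into a base of $r(k-1)$ elements and blocks $B_1, \ldots, B_{q_0}$ of size $r-1$, and color each $k$-set $A$ by the block containing $\max A$ (with the base handled by a single color, since any $r$ $k$-subsets of the base are $r$-wise intersecting by pigeonhole). This coloring is proper because $r$ pairwise disjoint sets would require $r$ distinct maxes inside one $(r-1)$-element block.

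To save $L$ colors, I pair up the top $2L$ blocks into $L$ pairs $\Pi_i = B_{q_0 - 2i+1} \cup B_{q_0 - 2i+2}$ for $i = 1, \ldots, L$ (each of size $2(r-1)$), and for each pair I select a ``body region'' $R_i \subseteq [n] \setminus \bigcup_{j \le L} \Pi_j$ of size $L$, with the $R_i$'s pairwise disjoint. The modified coloring is as follows. For a $k$-set $A$ with $\max A \notin \bigcup_j \Pi_j$: keep the standard color. For a $k$-set $A$ with $\max A \in \Pi_i$: if $A \setminus \{\max A\} \subseteq R_i$, assign the new merged color $c_i^*$; otherwise assign the standard color of $\max(A \setminus \bigcup_j \Pi_j)$, which lies in a block $B_\ell$ with $\ell \le q_0 - 2L$. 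Total number of colors: $q_0 - 2L + L = q_0 - L$.

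Validity for the re-routed classes is purely combinatorial: any $r$ pairwise disjoint sets assigned to a bottom block $B_\ell$ would contribute $r$ distinct ``governing'' elements (their max, or $\max(A \setminus \bigcup_j \Pi_j)$ if re-routed from the top) to $B_\ell$, which is impossible since $|B_\ell| = r-1$. For each merged class $c_i^*$, the potential monochromatic $r$-tuples consist of $r$ pairwise disjoint $k$-sets with maxes in $\Pi_i$ (contributing an $O(1)$ factor) and pairwise-disjoint bodies of $k-1$ elements in $R_i$ (contributing $\Theta(L^{r(k-1)})$), giving about $\Theta(L^{r(k-1)}) = \Theta(\log n)$ candidate tuples; each is a hyperedge independently with probability $1/2$, so for a fixed $R_i$ the probability that $c_i^*$ contains no hyperedge is $n^{-\Theta(c^{r(k-1)})}$. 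Since there are $\binom{n}{L} = n^{\omega(1)}$ choices for $R_i$, a second-moment argument provides a working $R_i$ a.a.s.

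The main obstacle is twofold. First, handling ``pure-top'' $k$-sets $A \subseteq \bigcup_j \Pi_j$ (which have no element available as a re-routed standard color destination): these form their own subproblem on $KG^r_{m,k}$ with $m = 2L(r-1)$, and must be fitted into the merged classes without inflating the color count; one way is to enlarge each $R_i$ to overlap selected $\Pi_j$'s while keeping the count of potential hyperedges per merged class at $O(\log n)$. Second, the joint a.a.s.\ analysis over all $L$ mergers requires showing that compatible body regions can be chosen simultaneously. Since the mergers are associated with disjoint pairs of blocks and disjoint body regions, the events ``$R_i$ works'' depend on disjoint random edges and are thus independent, allowing a straightforward union bound; pure-top handling, however, introduces additional constraints that may require the Lov\'asz Local Lemma to resolve cleanly.
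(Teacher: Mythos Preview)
Your overall strategy matches the paper's: modify the canonical block coloring by merging some blocks, and show by a second-moment computation that some choice of merger survives in $KG^r_{n,k}(1/2)$ with no monochromatic $r$-tuples. But your particular implementation, with the merged pairs $\Pi_i$ placed at the \emph{top} of the order and separate body regions $R_i$, leaves genuine gaps that your suggested patches do not close.

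The pure-top issue you flag is fatal as stated. Suppose you assign every pure-top set with $\max A\in\Pi_i$ to $c_i^*$. For the highest pair $\Pi_1$, the forbidden $r$-tuples consisting entirely of pure-top sets (maxes in $\Pi_1$, bodies in $\Pi_2\cup\dots\cup\Pi_L$) number $\Theta(L^{r(k-1)})=\Theta(\log n)$ and are completely determined by the fixed $\Pi_j$'s; they do not depend on $R_1$. The probability that all of them vanish is $n^{-\Theta(1)}$, so a.a.s.\ the color $c_1^*$ cannot be made independent no matter how $R_1$ is chosen. The Local Lemma does not help: this is not a collection of rare locally-dependent bad events, but a fixed block of $\Theta(\log n)$ independent coin flips that must all come up tails. ``Enlarging $R_i$ into selected $\Pi_j$'s'' runs into the same wall for the same reason. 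There is also a second gap you do not mention: a re-routed set $A$ can have $\max(A\setminus\bigcup_j\Pi_j)$ lying in the \emph{base} rather than in some $B_\ell$, and once such sets are sent to the base color your ``$r$ distinct governing elements in a block of size $r-1$'' argument no longer applies (the base has $r(k-1)\ge r$ elements), so the base color can acquire monochromatic $r$-tuples.

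The paper avoids both problems by a single change: put the special blocks at the \emph{bottom} of the order (and give them size $r$). Choose $l$ pairwise disjoint $r$-sets $A_1,\dots,A_l$, reorder so that $A_i=[r(i-1)+1,ri]$, and color $F$ by $i$ if $\max F\in A_i$ and by $l+\lceil(\max F-rl)/(r-1)\rceil$ otherwise. Now every $F$ with $\max F\in A_i$ automatically satisfies $F\subseteq A_1\cup\dots\cup A_i$: there is no re-routing, no body regions, no uncolored sets, and no base issue. The only possibly monochromatic $r$-tuples are those in the nested families $V_i=\{F\subseteq A_1\cup\dots\cup A_i: F\cap A_i\ne\emptyset\}$, of total count $\Theta(l^{r(k-1)+1})$. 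A single second-moment over the $\Theta(n^{rl})$ ordered choices of $(A_1,\dots,A_l)$ then gives the result for $l=C(\log n)^{1/(r(k-1))}$. If you tried to rescue your scheme by also randomizing the positions of the $\Pi_j$'s (the only way past the pure-top obstruction), the body regions $R_i$ would become redundant and you would be reproducing exactly this construction.
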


Theorem~\ref{thm2} implies the upper bound from Theorem~\ref{thm1} for $k\ge 3$ and will be proved in Section~\ref{sec4}. The proof of the upper bound from Theorem~\ref{thm1} for $k=2$ is given after that, in Section~\ref{sec41}.

We do not go into more historical details here, and refer the reader to \cite{Kup2} for a longer introduction to the subject and a more detailed comparison of the bounds in different regimes.

We note that, while the methods for studying Kneser graphs and hypergraphs in \cite{Kup}, \cite{AH} were topological, in \cite{Kup2} combinatorial methods relating the structure of $KG_{n,k}$ and $KG_{n,k+l}$ were used. In this paper we use (different) combinatorial and probabilistic methods, which are based on the analysis of the structure of independent sets in $KG_{n,k}(p)$. The proof of Theorem~\ref{thm3} both in the new and old versions uses only one topological statement as a black box: the fact that chromatic number of a certain graph ($KG_{n,k}$ or its subgrpaph $XG_{n,k}$, to be defined later) is $n-2k+2$.

In the next section we discuss some problems related to colorings of Kneser graphs; in Section~\ref{sec3} we give the proofs of the lower bound for Theorem~\ref{thm1} and of Theorem~\ref{thm3}. In Section~\ref{sec4} we prove Theorem~\ref{thm2}. In Section~\ref{sec5} we conclude, state  some open problems and discuss the relationship between some of the questions from Section~\ref{sec21} to colorings of random Kneser graphs.

\section{Kneser colorings}\label{sec2}

One of the difficulties that arises in the study of Kneser graphs is that we poorly understand the structure of a union of several intersecting families. 

\begin{prob}\label{pr1} Given $n$, $k$ and $t$, where $n\ge 2k+t-1$, what is the maximum size of a union of $t$ intersecting families $\ff_1,\ldots, \ff_t \subset {[n]\choose k}$? What can one say about the structure of such families?
\end{prob}
Some of the results in this direction come from the Erd\H os Matching Conjecture: under a weaker assumption that the family does not contain $t+1$ pairwise disjoint sets, and $n$ is at least $(2t+1)k-t$, Frankl~\cite{F4} showed that the largest such family must consist of all the sets intersecting a subset of size $t$. In a recent work, Frankl and the second author \cite{FK13} resolved the Erd\H os Matching Conjecture for $n\ge \frac 53 tk-\frac 23 t$ and $t\ge t_0$. The extremal family is a union of $t$ intersecting families, and thus we get the same result for the problem above. The problem was also addressed in \cite{FF84} and \cite{EL}, where the case of constant $t$ was studied. However, the more interesting cases of Problem~\ref{pr1} are when $t$ is of order $n$, and thus $n<kt$. In this range the Erd\H os Matching Conjecture becomes trivial, and we have very little understanding of how this maximum should behave.

In the same vein, since there is no combinatorial proof of Kneser's conjecture\footnote{We note that, nevertheless, there are some truly combinatorial arguments that give rather sharp lower bounds.}, even our understanding of the possible structure of optimal colorings of Kneser graphs is very limited. (We should note here that there is a paper by Matou\v sek with the title ``A combinatorial proof of Kneser's conjecture'' \cite{MatKne}. Still, the argument there is a ``combinatorialization'' of the known topological proof and, as such, does not give new insights into the problem.)

We know that, provided $n$ is large in comparison to $k$,  in every proper coloring there must be a color that forms a {\it star}: an intersecting family in which all sets contain a given element. Their presence allows us to reduce the study of  colorings of $KG_{n,k}$ to that of $KG_{n-1,k}$ by simply removing the center of the star and the corresponding color. This is the observation that motivates Lemma~\ref{lem2} and plays the key role in the proofs of Theorems~\ref{thm1} and~\ref{thm3}.  The existence of such `star' colors is easy to show for $n>k^3$: it simply follows from the Hilton--Milner theorem \cite{HM} and the pigeon-hole principle, which implies that there must be a color class larger than the size of a Hilton--Milner family.

Thus, it is natural to ask the following question:
\begin{prob}\label{pr66} Given $k$, what is the largest number $n(k)$, such that there exists a proper coloring of $KG_{n,k}$ with $n-2k+2$ colors (or, a covering of ${[n]\choose k}$ by intersecting families) without any color (family) being a star?\end{prob}
Initially, our intuition was that, unless $n$ is close to $2k$, all colorings of $KG_{n,k}$ should be somewhat resemblant of the `canonical coloring': color all sets containing $i$ into the $i$-th color, until left with subsets of $[n-2k+2,n]$, which you color in the same color. However, this intuition was wrong.

\begin{prop}
  We have $n(k)\ge 2(k-1)^2$ for $k\ge 3$.
\end{prop}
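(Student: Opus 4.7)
The plan is to exhibit an explicit cover of $\binom{[n]}{k}$ by $n-2k+2 = 2(k-1)(k-2)$ intersecting families, none of which is a star, for $n = 2(k-1)^2$. Setting $m := k-1$, so that $n = 2m^2$, partition $[n]$ into $2m$ blocks $B_1, \ldots, B_{2m}$ each of size $m = k-1$. Since every $k$-subset $S$ has $k = m+1 > m = |B_i|$ elements, $S$ must meet at least two distinct blocks.

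The color classes will be indexed by \emph{unordered pairs of distinct blocks}. There are $\binom{2m}{2} = m(2m-1)$ such pairs, which exceeds the quota of $2m(m-1)$ colors by exactly $m$. Accordingly, fix a perfect matching $M = \{\{B_1,B_2\}, \{B_3,B_4\}, \ldots, \{B_{2m-1},B_{2m}\}\}$ on the blocks and declare its $m$ pairs \emph{inadmissible}, so the remaining $2m(m-1) = n-2k+2$ \emph{admissible} pairs index our colors.

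To each $k$-set $S$ we associate an admissible pair $\{i,j\}$ using a rule based on its intersection profile $(|S \cap B_1|, \ldots, |S \cap B_{2m}|)$: roughly, the pair of blocks carrying the largest two intersection sizes, with a tie-breaker designed both to avoid pairs in $M$ and to force any two $k$-sets assigned the same color to share an element of $B_i \cup B_j$. The non-star property of each color class then follows at once by exhibiting two sets in the class with distinct intersections with $B_i \cup B_j$ (which is easy once the class contains more than one set).

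The main obstacle is verifying the intersecting property. For sets of the simple type $(m,1,0,\ldots,0)$ with heavy block $B_i$ one has $B_i \subseteq S$, so any two such sets trivially intersect in $B_i$. The delicate cases are those where $S$ spreads over three or more blocks, and the $k$-sets whose ``natural'' pair falls in $M$ and must be re-routed to an admissible pair. A careful case analysis, based on the few possible profiles of size $m+1$ on blocks of size $m$, is needed to show that in all situations two sets sharing a color share an element; the re-routing rule must also be arranged so that it does not destroy the intersecting property of the receiving color class. Once this is done, counting the $2m(m-1) = n-2k+2$ admissible pairs completes the proof.
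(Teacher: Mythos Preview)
Your proposal is not a proof but a plan, and the plan has a real gap at exactly the point you yourself flag as ``the main obstacle.'' You never actually specify the coloring rule, and the heuristic you offer --- assign $S$ to the pair of blocks carrying its two largest intersections --- cannot by itself make each color class intersecting. With blocks of size $m=k-1\ge 2$, a $k$-set can have profile $(1,1,\ldots,1,0,\ldots,0)$ with $m+1$ ones; two such sets supported on the \emph{same} $m+1$ blocks but picking different elements in each block are disjoint, yet any rule that depends only on the intersection profile (as you say yours does) must give them the same color. More generally, for such sets $|S\cap(B_i\cup B_j)|\le 2$ for every pair $\{i,j\}$, so your stated goal of forcing ``any two $k$-sets assigned the same color to share an element of $B_i\cup B_j$'' is simply unattainable for this profile. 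The re-routing away from the matching $M$ only adds further collisions. So the case analysis you defer is not a routine verification: as stated, the scheme does not work, and a genuinely different assignment rule (not just a tie-breaker) would be needed.

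The paper's construction is quite different and sidesteps this difficulty. It partitions $[n]$ into $k-1$ blocks of size $2k-2$ (rather than $2(k-1)$ blocks of size $k-1$); by pigeonhole every $k$-set meets some block in at least two elements. Within each block $A$ one writes down $2k-4$ explicit non-star intersecting families: $2k-5$ Hilton--Milner-type families $\mathcal H_i=\{F:i\in F,\ F\cap F_i\neq\emptyset\}\cup\{F_i\}$ for specific $k$-sets $F_i\subset A$, together with one ``majority'' family on three designated elements of $A$. A short four-case check shows that every $F$ with $|F\cap A|\ge 2$ lies in one of these. The total number of families is $(k-1)(2k-4)=2(k-1)(k-2)=n-2k+2$, matching your count, but here the intersecting and non-star properties are immediate from the Hilton--Milner structure rather than something to be engineered after the fact.
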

\begin{proof} Put $n:=2(k-1)^2$ and split the ground set $[n]$ into $k-1$ blocks $A_i$ of size $2k-2$. For each block, say, $A_1:=[2k-2]$, consider the following covering by intersecting families: for $i=1,\ldots, 2k-5$, define
$$F_i:=\{2k-4,2k-3,2k-2\}\cup \{i+1,\ldots, i+k-3\},$$
where the addition and substraction in the second part of the set is modulo $2k-5$ (and thus the elements belong to $[2k-5]$). Consider the intersecting Hilton-Milner-type families of the form
$$\mathcal H_i:=\Big\{F\in {[n]\choose k}: i\in F, F\cap F_{i}\ne\emptyset\Big\}\cup F_{i}.$$
Complement it with the intersecting family
$$\mathcal G:=\Big\{F\in {[n]\choose k}: |F\cap \{2k-4,2k-3,2k-2\}|\ge 2\Big\}.$$
If a set $G\cap [2k-2]\supset \{i,j\}$ for $i<j$, then $G$ is contained in one of the families $\mathcal G$, $\mathcal H_l,$ $l\in [2k-5]$. Indeed, \begin{itemize}
\item if $\{i,j\}\subset \{2k-4,2k-3,2k-2\}$, then $G\subset \mathcal G$;
\item if $i<2k-4\le j$, then $G\subset \mathcal H_i$;
\item if $j<2k-4$ and $j\le i+k-3$, then $G\subset \mathcal H_i$; 
\item if $i+k-2\le j<2k-4$, then $j+k-3$ mod $2k-5$ is at least $i$, and $G$ is contained in $\mathcal H_j$.
\end{itemize}
Therefore, any set intersecting $A_1$ in at least $2$ elements is contained in one of the intersecting families given above. On the other hand, any $k$-set must intersect one of the $k-1$ blocks in at least $2$ elements. Thus, considering similar collections of intersecting families in the other blocks, we get that the whole of ${[n]\choose k}$ is covered.

We have $2k-4$ intersecting families on each block, which gives $(2k-4)(k-1)$ families in total. On the other hand, $\chi(KG_{n,k}) = 2(k-1)^2-2k+2 = 2(k-2)(k-1)$, that is, the number of intersecting families we used equals  the chromatic number of the graph. It is also clear that none of the families is a star, and we can easily preserve this property when making a coloring (rather than a covering).
\end{proof}
It is easy to see that, by slightly modifying the construction above, we can produce such a coloring for each $n\le 2(k-1)^2$.
We conjecture that such a coloring is impossible for bigger $n(k)$, and thus $n(k)= 2(k-1)^2$ for all $k\ge 3$. \\

\textbf{Remark.} In a recent paper \cite{KKKC}, we managed to show that $n(k) = (2+o(1))k^2$.

\subsection{Number of monochromatic edges and vertices in colorings of $KG_{n,k}$} \label{sec21} If we color $KG_{n,k}$ by $\chi(KG_{n,k})-1=n-2k+1$ colors, then we of course get at least $1$ monochromatic edge. But the intuition suggests that we should have much more. Modifying the standard coloring of $KG_{n,k}$ by coloring subsets on the last $2k$ elements (instead of $2k-1$) in the same color, we get a coloring in $n-2k+1$ colors with $\frac12 {2k\choose k}$ monochromatic edges.\footnote{It was pointed to us by an anonymous referee that Problem~\ref{pr2} and in particular this construction  is directly related to \cite[Section 3.5, Exercise~3 (due to Anders Bj\"orner)]{Mat}. This exercise asks for a construction of a coloring in $n-2k+1$ colors with $\frac12\binom{2k}{k}$ monochromatic edges and asks to show that this bound is best possible when either $k=2$ or $n=2k+1$.}

\begin{prob}\label{pr2} Given $k$ and $n$, what is the minimum number $\zeta$ of monochromatic edges in a coloring of $KG_{n,k}$ into $n-2k+1$ colors?
\end{prob}


One approach that allows to get some bounds on $\zeta$ is via {\it Schrijver graphs} $SG_{n,k}$, that is, induced subgraphs of Kneser graphs on $k$-sets not containing two cyclically consecutive elements of $[n]$. We call such $k$-sets {\it stable}. It is known that $\chi(SG_{n,k})=\chi(KG_{n,k})$. However, the number of vertices in $SG_{n,k}$ is roughly ${n-k\choose k}$. Thus, taking a $(n-2k+1)$-coloring of $KG_{n,k}$, we get monochromatic edges in each induced $SG_{n,k}$ (where different copies of $SG_{n,k}$ correspond to different orderings of $[n]$), and, averaging over the choice of the ordering, we can conclude that there are at least $\frac{|E(KG_{n,k})|}{|E(SG_{n,k})|}$ monochromatic edges in $KG_{n,k}$. This gives good bounds for $n=ck$ for  constant $c$, but already for $n=k^2$ the aforementioned ratio is a constant independent of $k$.\\

\textbf{Remark.} Very recently, Kaiser and Stehl\'{\i}k \cite{KS} constructed edge-critical (w.r.t. chromatic number) subgraphs of $SG_{n,k}$ with few edges. Their construction, using the argument above, implies that $\zeta\ge 2^{-k}{2k\choose k} = (2+o(1))^k.$ We  discuss these graphs in Section~\ref{secxg}. \\

It may be even more natural to study the vertex version of the problem. We believe that the following strengthening of  Lov\'asz' bound $\chi(KG_{n,k})\ge n-2k+2$ should be true.

\begin{conj}\label{conj1} The largest subset of vertices of $KG_{n,k}$ that may be properly colored in $n-2k+1$ colors has size at most ${n\choose k}-c^k$, where $c>1$ is some constant.
\end{conj}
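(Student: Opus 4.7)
I would approach Conjecture~\ref{conj1} by induction on $n$, keeping $k\ge 3$ fixed. Let $f(n,k)$ denote the maximum size of a subset $S\subseteq \binom{[n]}{k}$ that decomposes as the union of $n-2k+1$ intersecting families (equivalently, that induces a properly $(n-2k+1)$-colorable subgraph of $KG_{n,k}$); the aim is to show $f(n,k)\le \binom{n}{k}-c^{k}$ for some absolute constant $c>1$. The base case $n=2k$ is immediate from Erd\H os--Ko--Rado: $S$ is a single intersecting family, so $|S|\le \binom{2k-1}{k-1}$, and the defect is at least $\binom{2k-1}{k}\sim 4^{k}/(2\sqrt{\pi k})$, which exceeds $c^{k}$ for any $c<4$ once $k$ is large enough (the finitely many small $k$ can be absorbed by moving $c$ towards $1$).

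For the inductive step the key structural observation is the one highlighted by the authors in the paragraph preceding the conjecture: if one color class of a proper $(n-2k+1)$-coloring of $S$ is a star centered at some $x\in[n]$, then $S'':=S\setminus\{F : x\in F\}\subseteq \binom{[n]\setminus\{x\}}{k}$ is properly colored by at most $n-2k=(n-1)-2k+1$ colors. The inductive hypothesis at $(n-1,k)$ then gives $|S''|\le \binom{n-1}{k}-c^{k}$, and since at most $\binom{n-1}{k-1}$ sets of $S$ contain $x$, Pascal's identity yields $|S|\le \binom{n-1}{k}-c^{k}+\binom{n-1}{k-1}=\binom{n}{k}-c^{k}$, closing the induction.

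The crux is therefore the following ``star-color'' statement, to be established uniformly in $n$: in every proper $(n-2k+1)$-coloring of any $S\subseteq\binom{[n]}{k}$ with $|S|\ge \binom{n}{k}-c^{k}$, at least one color class is a star. For $n>k^{3}$ this is immediate from the Hilton--Milner theorem and pigeonhole, since the average color class size then exceeds the Hilton--Milner bound $\binom{n-1}{k-1}-\binom{n-k-1}{k-1}+1$. The main obstacle is the intermediate range $2k\le n\le k^{3}$, which is precisely the regime where Problem~\ref{pr1} is least understood. I see two plausible angles of attack here. First, strengthen the inductive hypothesis to a two-parameter invariant tracking an ``approximate star-deficiency,'' so that one can afford to contract along an almost-star rather than a perfect one, at the cost of a slightly smaller $c$; the star-deficiency should propagate under the induction with a loss controllable by the Hilton--Milner quantity. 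Second, invoke Ahlswede--Khachatrian-style stability results to show that non-star intersecting families of near-extremal size are so rigid that $n-2k+1$ of them cannot jointly cover $\binom{n}{k}-c^{k}$ of the $k$-sets; here one would play off the many forced coincidences between the ``kernels'' of the color classes. I expect making either approach work in the full intermediate regime to require new combinatorial input of the sort that Problem~\ref{pr1} itself calls for, and I regard this as the principal source of difficulty.
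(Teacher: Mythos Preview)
Conjecture~\ref{conj1} is stated in the paper as an open conjecture; the paper gives no proof, so there is nothing to compare your argument against directly. Your inductive scaffold is sound --- the base case $n=2k$ via Erd\H os--Ko--Rado is correct, and the inductive step through a star color class is the natural reduction (the paper uses the same ``remove a star center'' device repeatedly, e.g.\ in the deduction from Lemma~\ref{lem2}). But you yourself identify that the entire burden lands on the star-color assertion in the range $2k\le n\le k^{3}$, and neither of your two suggested attacks is carried beyond a one-line sketch. This gap is genuine and not minor: the paper's construction in Section~\ref{sec2} showing $n(k)\ge 2(k-1)^{2}$ exhibits, for every $n\le 2(k-1)^{2}$, a proper $(n-2k+2)$-coloring of the \emph{full} $\binom{[n]}{k}$ with no star color class at all. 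So the star-color phenomenon you require is extremely sensitive to dropping a single color and to $|S|$ being within $c^{k}$ of $\binom{n}{k}$; your outline gives no mechanism that exploits either of these two features, and in the intermediate regime your ``key lemma'' is not visibly easier than the conjecture itself.

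The paper does offer one concrete reduction you do not mention: via Sarkaria's inequality (Theorem~\ref{thmsark}), Conjecture~\ref{conj1} would follow from Conjecture~\ref{conj2}, a purely topological statement bounding the $\mathbb Z_{2}$-index of the deleted join of any simplicial complex with fewer than $c^{k}$ faces of size $k$. That route sidesteps the combinatorics of the intermediate range entirely, at the price of an open topological problem.
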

Note that Conjecture~\ref{conj1} deals with a particular instance of Problem~\ref{pr1}.
If one modifies the canonical coloring of $KG_{n,k}$ by first taking $n-2k$ stars and then an intersecting family on the remaining set ${X\choose k}$, $|X|=2k$, then the number of ``missing'' sets is $\frac12{2k\choose k}\approx 4^k$. Interestingly, we can do better. Assume that $n=3k$ and color a set $A\in {[n]\choose k}$ into the $i$-th color if $|A\cap [3i-2,3i]|\ge 2$, $i=1,\ldots, k$. Clearly, each color is an intersecting family, moreover, we used $k=n-2k$ colors. At the same time, a set $B$ is not colored if and only if for each $i\in[k]$ it satisfies $|B\cap [3i-2,3i]|=1$. There are $3^k$ such sets in total. One can do the same for larger $n$ by first taking the stars and using this coloring on the last $3k$ elements. We actually believe that this is essentially the best one can do, and, in particular, $c=3+o_k(1)$ in the conjecture above. We note that the same argument with averaging over different copies of Schrijver graphs would also work here, but again has the same limitations (i.e., gives interesting results only for small $n = n(k)$).

The following is based on the discussion we had with Florian Frick and G\'abor Tardos. Let us first give some topological preliminaries. (For those unfamiliar with the subject, we advise  to consult the book of Matou\v sek \cite{Mat}.) For a family $\ff$, let $KG(\ff)$ be the {\it Kneser graph of $\ff$}, that is, the graph with vertex set $\ff$ and edges connecting disjoint sets. In particular, $KG_{n,k}=KG({[n]\choose k})$. 
A {\it simplicial complex} $\mathcal H\subset 2^{[n]}$ is a family satisfying the condition that if $H\in \mathcal H$ and $H'\subset H$, then $H'\in \mathcal H$. For a simplicial complex $\mathcal H$, we denote the union of all its simplices (its {\it polyhedron}) by $\|\mathcal H\|$. One can define the {\it deleted join} $\mathcal H^{*2}_{\Delta}$ of a complex $\mathcal H$ as follows:
$$\mathcal H^{*2}_{\Delta}:=\big\{(H_1\times\{1\})\cup (H_2\times \{2\}):H_1,H_2\in \mathcal H, H_1\cap H_2=\emptyset\big\}.$$
There is a natural free $\mathbb Z_2$-action on $\mathcal H^{*2}_{\Delta}$, and we can define the $\mathbb Z_2$-index $\mathrm{ind}_{\mathbb Z_2}(\mathcal H^{*2}_{\Delta})$ of $\mathcal H^{*2}_{\Delta}$ as the minimum dimension $d$ of a sphere $S^{d}$, for which there exists a continuous map $\|\mathcal H^{*2}_{\Delta}\|\to S^{d}$ that commutes with the $\mathbb Z_2$-actions on the spaces.
The following theorem was proven by Sarkaria (see \cite[Theorem~5.8.2]{Mat}):

\begin{thm}\label{thmsark}
  Let $\mathcal H$ be a simplicial complex on $n$ vertices and let $\ff$ be the family of all inclusion-minimal sets in $2^{[n]}\setminus \mathcal H$ (the {\it family of minimal non-faces of $\mathcal H$}). Then
  \begin{equation}\label{eqsark}\mathrm{ind}_{\mathbb Z_2}(\mathcal H^{*2}_{\Delta})\ge n-\chi(KG(\ff))-1.\end{equation}
  Consequently, if $d\le n-\chi (KG(\ff))-2$ then, for any continuous map $f: \|\mathcal H\|\to \mathbb R^{d}$, the images of some two disjoint faces of $\mathcal H$ intersect.
\end{thm}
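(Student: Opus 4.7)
My plan is to establish \eqref{eqsark} by (i) using a proper coloring of $KG(\ff)$ to build a continuous $\mathbb Z_2$-equivariant map from the complement of $\|\mathcal H^{*2}_\Delta\|$ inside the ambient sphere $\|(2^{[n]})^{*2}_\Delta\| \cong S^{n-1}$ (the boundary of the $n$-dimensional cross-polytope) into $S^{m-1}$, where $m := \chi(KG(\ff))$, and (ii) invoking the standard complement inequality $\mathrm{ind}_{\mathbb Z_2}(X) + \mathrm{ind}_{\mathbb Z_2}(S^{n-1}\setminus X) \ge n-2$, valid for any closed $\mathbb Z_2$-invariant subcomplex $X$ of $S^{n-1}$ (a consequence of equivariant Alexander duality that I would simply cite). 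The second assertion is then formal: if $f: \|\mathcal H\| \to \R^d$ had no two disjoint faces with overlapping images, then the normalized difference $\tilde f(x,y) := (f(x)-f(y))/\|f(x)-f(y)\|$ would be a $\mathbb Z_2$-map $\|\mathcal H^{*2}_\Delta\| \to S^d$, forcing $d \ge \mathrm{ind}_{\mathbb Z_2}(\mathcal H^{*2}_\Delta) \ge n-\chi(KG(\ff))-1$, which is exactly the contrapositive of the claim.

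The combinatorial heart of (i) is the following assignment. Fix a proper coloring $c: \ff \to [m]$ with intersecting color classes $\ff_1, \ldots, \ff_m$, and for each $S \subseteq [n]$ set $\lambda(S) := \{j \in [m] : F \subseteq S \text{ for some } F \in \ff_j\}$. Because every non-face contains some minimal non-face, $\lambda(S) \ne \emptyset$ precisely when $S \notin \mathcal H$. The key observation is that for disjoint $A, B \subseteq [n]$ one has $\lambda(A) \cap \lambda(B) = \emptyset$: a common color $j$ would produce sets $F_A, F_B \in \ff_j$ with $F_A \subseteq A$ and $F_B \subseteq B$, which would be disjoint and contradict the intersecting property of $\ff_j$. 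Consequently, the assignment $(A,B) \mapsto (\lambda(A),\lambda(B))$ carries simplices of $(2^{[n]})^{*2}_\Delta$ not lying in $\mathcal H^{*2}_\Delta$ to nonempty faces of $(2^{[m]})^{*2}_\Delta \cong S^{m-1}$, and it respects the swap $\mathbb Z_2$-action.

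To promote this to a genuine continuous $\mathbb Z_2$-equivariant map $\phi: \|(2^{[n]})^{*2}_\Delta\| \setminus \|\mathcal H^{*2}_\Delta\| \to S^{m-1}$, I would distribute the barycentric mass of a point in the open simplex indexed by $(A,B)$ onto the target simplex $(\lambda(A),\lambda(B))$ by assigning to each color $j \in \lambda(A)$ a weight built from the input's barycentric coordinates at the vertices of the minimal non-faces of color $j$ sitting inside $A$ (and analogously on the $B$ side), then renormalizing. Monotonicity of $\lambda$ under inclusion supplies the compatibility across face identifications, and equivariance is manifest from the symmetry of the recipe. Combining the resulting bound $\mathrm{ind}_{\mathbb Z_2}(S^{n-1} \setminus \|\mathcal H^{*2}_\Delta\|) \le m - 1$ with the complement inequality then yields \eqref{eqsark}.

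The principal technical obstacle I expect to meet is precisely the continuity check for $\phi$: a naive per-simplex recipe can fail to glue, because the witnessing minimal non-face chosen for a pair $(j,A)$ need not remain inside $A'$ when we restrict to a subface $A' \subsetneq A$ that still satisfies $j \in \lambda(A')$. The resolution I have in mind is to make the weight at color $j$ a \emph{symmetric} function of all available witnesses (for instance, the sum over all $F \in \ff_j$ with $F \subseteq A$ of the product of the input's barycentric coordinates at the vertices of $F$), and to verify that this symmetric function is monotone and vanishes exactly on the appropriate faces. This bookkeeping is routine in equivariant topology but is the step that must be written out with real care to make the rest of the argument go through cleanly.
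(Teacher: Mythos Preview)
The paper does not prove this theorem; it quotes it from Matou\v sek \cite[Theorem~5.8.2]{Mat} and uses it as a black box. So there is no in-paper proof to compare against, and I evaluate your sketch on its own merits.

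Your argument for \eqref{eqsark} is essentially the standard one, reorganized. The combinatorial core (that $\lambda(A)\cap\lambda(B)=\emptyset$ for disjoint $A,B$) is exactly Sarkaria's observation, and your symmetric-weight recipe for promoting the face assignment to a continuous $\mathbb Z_2$-map is sound; the continuity concern you flag is real and your proposed fix handles it. The ``complement inequality'' you invoke is correct, though I would not attribute it to equivariant Alexander duality. The clean justification is: take a $\mathbb Z_2$-invariant open neighborhood $U$ of $\|\mathcal H^{*2}_\Delta\|$ that equivariantly retracts onto it, set $V=S^{n-1}\setminus\|\mathcal H^{*2}_\Delta\|$, and use an invariant partition of unity subordinate to $\{U,V\}$ to build a $\mathbb Z_2$-map $S^{n-1}\to U*V$; then $n-1\le \mathrm{ind}_{\mathbb Z_2}(U)+\mathrm{ind}_{\mathbb Z_2}(V)+1$. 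In fact, composing this join map with $(\text{identity on }U)*(\text{your }\phi\text{ on }V)$ \emph{is} the map $S^{n-1}\to \mathcal H^{*2}_\Delta * S^{m-1}$ that the textbook presentation constructs in one step; your two-stage route and the standard route are the same argument viewed from slightly different angles.

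Your deduction of the ``Consequently'' clause has a genuine, if easily repaired, slip. The formula $\tilde f(x,y)=(f(x)-f(y))/\|f(x)-f(y)\|$ is defined only on the deleted-\emph{product} part of the deleted join: at a join point $t\cdot x\oplus(1-t)\cdot y$ with $t\in\{0,1\}$ one of $x,y$ is absent, so $f(x)-f(y)$ makes no sense. Moreover, where it is defined your formula lands in $S^{d-1}$, not in the $S^d$ you wrote (and a map to $S^{d-1}$ would prove a statement one dimension stronger than the theorem asserts, which should have been a warning sign). The correct Gauss-type map is
\[
t\cdot x\oplus(1-t)\cdot y\ \longmapsto\ \big(\,t\,f(x)-(1-t)\,f(y),\ 2t-1\,\big)\in\R^{d+1}\setminus\{0\},
\]
which is globally defined on $\|\mathcal H^{*2}_\Delta\|$, equivariant under swap/antipode, and after normalization lands in $S^{d}$. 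This yields $\mathrm{ind}_{\mathbb Z_2}(\mathcal H^{*2}_\Delta)\le d$, and combining with \eqref{eqsark} gives exactly the stated obstruction.
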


Consider a family $\ff\subset{[n]\choose k}$ that is a union of $n-2k+1$ intersecting families and let $\mathcal H\subset 2^{[n]}$ be the family of sets in $2^{[n]}$ that do not contain any $F\in \ff$. Clearly, $\mathcal H$ is a simplicial complex, moreover, $\ff$ is the family of minimal non-faces of $\mathcal H$. We have $\chi (KG(\ff))\le n-2k+1$ and thus $\mathrm{ind}_{\mathbb Z_2}(\mathcal H^{*2}_{\Delta})\ge 2k-2$ by Theorem~\ref{thmsark}. Thus, Conjecture~\ref{conj1} is implied by the following conjecture.
\begin{conj}\label{conj2}
There exists $c>1$, such that for every $k$ if $\mathcal H$ is a simplicial complex that has fewer than $c^k$ $k$-element sets, then $\mathrm{ind}_{\mathbb Z_2}(\mathcal H^{*2}_{\Delta})\le 2k-3$ (or even the following is true: $\mathcal H$ is embeddable into $\mathbb R^{2k-3}$).
\end{conj}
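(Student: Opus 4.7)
I would try to prove the weaker $\mathbb Z_2$-index version of the conjecture by induction on $k$. The case when $\mathcal H$ has no $k$-face at all is trivial: $\dim\mathcal H\le k-2$ forces $\dim\mathcal H^{*2}_\Delta\le 2k-3$, and hence $\mathrm{ind}_{\mathbb Z_2}(\mathcal H^{*2}_\Delta)\le 2k-3$. So one may assume $1\le f_k(\mathcal H)<c^k$. Choose a vertex $v$ of $\mathcal H$ lying in the minimum number $d_v$ of $k$-sets; by double counting, $d_v\le k f_k(\mathcal H)/n$, where $n=|V(\mathcal H)|$.

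The heart of the plan is an equivariant local decomposition. Writing $\mathcal H=\mathrm{del}(v)\cup(v\ast\mathrm{lk}(v))$ one obtains a $\mathbb Z_2$-equivariant pushout expressing $\mathcal H^{*2}_\Delta$ as the union of $(\mathrm{del}(v))^{*2}_\Delta$ and a piece glued through $\mathrm{lk}(v)^{*2}_\Delta$. An equivariant Mayer--Vietoris / cofibration argument should yield an inequality of the form
\[
\mathrm{ind}_{\mathbb Z_2}(\mathcal H^{*2}_\Delta)\le \max\bigl\{\mathrm{ind}_{\mathbb Z_2}((\mathrm{del}(v))^{*2}_\Delta),\;\mathrm{ind}_{\mathbb Z_2}(\mathrm{lk}(v)^{*2}_\Delta)+2\bigr\}.
\]
Since the $(k-1)$-faces of $\mathrm{lk}(v)$ are exactly the $k$-sets of $\mathcal H$ containing $v$, the link has $d_v$ top faces; if $d_v<c^{k-1}$ the inductive hypothesis delivers $\mathrm{ind}_{\mathbb Z_2}(\mathrm{lk}(v)^{*2}_\Delta)\le 2k-5$, and the right-hand side is controlled. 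Iterating the inequality against $\mathrm{del}(v)$, whose $k$-face count has dropped by $d_v\ge 1$, would close the induction.

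The main obstacle is twofold. First, the pushout inequality with the clean additive ``$+2$'' is not entirely standard; a careful equivariant Mayer--Vietoris setup is required, and in the worst case one may only get an additive constant depending on $\dim\mathrm{lk}(v)$, which destroys the gain. Second, and more seriously, the hypothesis $d_v<c^{k-1}$ is not guaranteed by $f_k(\mathcal H)<c^k$: averaging only gives $d_v\le k f_k(\mathcal H)/n$, which need not beat $c^{k-1}$ unless $n$ is large relative to $k$. Circumventing this likely requires either a Kruskal--Katona-type preprocessing step to pass to a subcomplex on many vertices whose link degrees are controlled, or a two-parameter induction that simultaneously tracks $f_k$ and $n$. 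The extremal complex $[3]^{\ast k}$ (with exactly $3^k$ top faces and $([3]^{\ast k})^{*2}_\Delta\simeq S^{2k-1}$, hence index $2k-1$) pins the conjectured constant at $c=3$, and matching this tight value --- rather than merely getting some $c>1$ --- is precisely what makes Conjecture~\ref{conj2} genuinely difficult and what keeps it open.
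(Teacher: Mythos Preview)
The statement you are addressing is Conjecture~\ref{conj2}, and the paper does \emph{not} prove it: it is explicitly left as an open conjecture. There is therefore no ``paper's own proof'' to compare your proposal against. You appear to recognise this yourself in your final sentence, where you speak of what ``keeps it open''; so your write-up is better read as a discussion of a possible approach and its difficulties than as a claimed proof.

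On the substance of your sketch: the two obstacles you flag are real, and either one already blocks the argument as stated. The displayed inequality
\[
\mathrm{ind}_{\mathbb Z_2}(\mathcal H^{*2}_{\Delta})\le \max\bigl\{\mathrm{ind}_{\mathbb Z_2}((\mathrm{del}\,v)^{*2}_{\Delta}),\;\mathrm{ind}_{\mathbb Z_2}((\mathrm{lk}\,v)^{*2}_{\Delta})+2\bigr\}
\]
is not a standard fact, and the decomposition of $\mathcal H^{*2}_\Delta$ you are appealing to does not obviously produce it: the ``mixed'' simplices, those pairs $(\{v\}\cup F',G)$ with $F'\in\mathrm{lk}(v)$ and $G\in\mathrm{del}(v)$ disjoint from $\{v\}\cup F'$, involve $\mathrm{del}(v)$ and not only $\mathrm{lk}(v)$, so the piece glued on is not governed by $(\mathrm{lk}\,v)^{*2}_\Delta$ alone. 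Second, even granting the inequality, the averaging bound $d_v\le k f_k(\mathcal H)/n$ gives no control whatsoever when $n$ is comparable to $k$, and nothing in the hypothesis $f_k(\mathcal H)<c^k$ forces $n$ to be large. The extremal example you cite, $[3]^{\ast k}$ on $n=3k$ vertices, is precisely a case where every vertex has the same link degree $3^{k-1}$, so there is no ``good'' vertex to delete and the induction cannot start.

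In short: this is an open problem in the paper, your proposal is a heuristic outline rather than a proof, and the gaps you identify are genuine and, as far as the paper is concerned, unresolved.
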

Note that, substituting $c=1$ in the conjecture above, we get the Geometric Realization Theorem, stating that any finite $(k-2)$-dimensional simplicial complex has a geometric realization in $\mathbb R^{2k-3}$ (see \cite[Theorem~1.6.1]{Mat}).

\section{Proofs of lower bounds from Theorems~\ref{thm1} and~\ref{thm3}}\label{sec3}

For shorthand, we say that a subgraph $H$ of $KG_{n,k}$ or $KG_{n,k}(p)$ is {\it induced on a subset $R\subset[n]$} instead of saying that $H$ is an induced subgraph of the corresponding graph on the subset ${R\choose k}$ of vertices. We denote such graph by $KG_{n,k}|_R$ and $KG_{n,k}(1/2)|_R$, respectively.
The {\it star} $\mathcal S_x$ with center $x$, $x\in[n]$, is the collection of all vertices of $KG_{n,k}$ which contain  $x$.



In what follows, we say $f(n) \gg g(n)$ iff $g(n) = o(f(n))$. To simplify presentation, we omit floors and ceilings in the expressions that are meant to be integral in the places where this does not affect the calculations.

\medskip

Bound \eqref{eqgrowk} implies the statement of Theorem~\ref{thm3} for any $k\ge C(n\log n)^{1/6}$. Moreover,  $k$ is fixed and $n$ tends to infinity in Theorem~\ref{thm1}. Therefore, in what follows, we assume that $n \gg k^5$ for the proof of both theorems. In this section, we give the proofs of Theorems~\ref{thm1} and~\ref{thm3} modulo three key lemmas (Lemmas~\ref{lem2} and~\ref{lem3} for Theorem~\ref{thm1} and Lemmas~\ref{lem2} and~\ref{lem3_growk} for Theorem~\ref{thm3}). We give the proofs of the lemmas in the latter subsections.

\begin{lemma}\label{lem2}
    Let $h, k\ge 2$ satisfy $h \gg k^{5}$,  ${h / (3k^2)\choose k} > 2^{20k} h\log_2 n$ and ${h / (3k^2)\choose k}^2 > 2^{40k} h^3\log_2 n$. Then the random graph $KG_{n,k}(1/2)$ a.a.s. satisfies the following condition, which holds for all  $h$ and $G$ as below simultaneously.

    If $G$ is a subgraph of $KG_{n,k}(1/2)$, induced on a subset of  $h$ elements from $[n]$ and it is properly colored in $t \le h$ colors, then one of the colors forms a subset of a star.
\end{lemma}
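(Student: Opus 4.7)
Plan. I would prove Lemma~\ref{lem2} by contradiction via a union bound over an extracted ``witness'' structure. Suppose that for some $R \subseteq [n]$ of size $h$, the random graph $G := KG_{n,k}(1/2)|_R$ admits a proper coloring into $t \le h$ classes, none of which is a subset of a star. By pigeonhole, some class $C$ satisfies $|C| \ge \binom{h}{k}/h = \binom{h-1}{k-1}/k$. Since $h \gg k^5$, this size strictly exceeds the Hilton--Milner bound $1 + \binom{h-1}{k-1} - \binom{h-k-1}{k-1}$ on intersecting non-star families, so $C$ cannot be intersecting---it must contain disjoint pairs. In particular $\Pr[C \text{ independent in } G] = 2^{-d(C)}$, where $d(C)$ is the number of disjoint pairs in $C$.

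The heart of the argument is to extract from any such ``bad'' $C$ a subfamily $W \subseteq C$ that is small enough to enumerate but dense enough in disjoint pairs to make $\Pr[\exists W] \le (\#\,\text{witnesses}) \cdot 2^{-d(W)}$ tractable. I would proceed by a dichotomy on how concentrated $C$ is around a single star. In the \emph{concentrated} case ($|C \cap \mathcal{S}_{x_0}| \ge (1-\eta)|C|$ for some $x_0 \in R$ and a small constant $\eta$), pick $B \in C \setminus \mathcal{S}_{x_0}$ (such $B$ exists since $C \not\subseteq \mathcal{S}_{x_0}$), and by a first-moment argument over $(k-1)$-subsets of $R \setminus (B \cup \{x_0\})$ find an $r$-subset $R' \subseteq R \setminus (B \cup \{x_0\})$, $r := h/(3k^2)$, on which a constant fraction of the $k$-sets of the form $\{x_0\} \cup T$ with $T \in \binom{R'}{k-1}$ lies in $C$. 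The witness is $B$ together with these star-sets (all automatically disjoint from $B$), giving of order $\binom{r}{k}$ disjoint pairs; the first hypothesis $\binom{r}{k} > 2^{20k} h \log_2 n$ is tailored to make the resulting union bound $o(1)$.

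In the \emph{spread-out} case (every star absorbs at most $(1-\eta)|C|$), averaging over random partitions of $R$ into halves produces two \emph{disjoint} subsets $R_1, R_2 \subseteq R$ of size $r$ each with $|C \cap \binom{R_i}{k}| \gtrsim \binom{r}{k}$ simultaneously for $i = 1, 2$. The bipartite witness $W = W_1 \sqcup W_2$ with $W_i \subseteq C \cap \binom{R_i}{k}$ then consists of $|W_1| \cdot |W_2| \sim \binom{r}{k}^2$ automatically disjoint pairs, and the second hypothesis $\binom{r}{k}^2 > 2^{40k} h^3 \log_2 n$ is tailored for its union bound. Summing over $\le n^h$ choices of $R$, the $\le n$ admissible values of $h$, and the witness enumerations (absorbed into the $2^{20k}$ and $2^{40k}$ slack) then yields total failure probability $o(1)$, which gives the ``for all $h$ and $G$ simultaneously'' claim.

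The main obstacle is the spread-out case: upgrading ``$C$ is $\eta$-far from every star'' into the existence of two \emph{disjoint} $r$-subsets of $R$ on each of which $C$ has density $\Omega(\binom{r}{k})$. A first-moment argument on a single random $r$-subset captures only a $\binom{r}{k}/h$-fraction of $C$ in expectation, so reaching $\Omega(\binom{r}{k})$ simultaneously on two halves requires a careful concentration step; this is where the ``far from every star'' hypothesis---together with the $h \gg k^5$ regime---is really used.
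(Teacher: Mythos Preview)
Your plan has a genuine gap in the concentrated case, and it is the case that actually matters.

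First, the count is off: your witness is one set $B$ together with star-sets $\{x_0\}\cup T$, $T\in\binom{R'}{k-1}$, so there are at most $\binom{r}{k-1}$ such sets and hence at most $\binom{r}{k-1}$ disjoint pairs with $B$, not $\binom{r}{k}$. With $r=h/(3k^2)$ the ratio $\binom{r}{k-1}/\binom{r}{k}\sim 3k^3/h$ is tiny, and the first hypothesis no longer says your pair count exceeds $h\log_2 n$. More fundamentally, even the corrected count cannot close the union bound. Your first-moment step only gives a $\sim 1/k$ fraction of the star on $R'$ inside $C$ (because $|C|\ge\binom{h}{k}/h=\binom{h-1}{k-1}/k$ is only a $1/k$-fraction of a full star). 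To name the witness you must enumerate which $N\approx\binom{r}{k-1}/k$ star-sets form $W$; this costs $\binom{\binom{r}{k-1}}{N}\approx 2^{H(1/k)\binom{r}{k-1}}$, while the probability gain is only $2^{-N}=2^{-\binom{r}{k-1}/k}$. Since $H(1/k)>1/k$ for every $k\ge 2$, the product diverges before you have even paid the $n^h$ for $R$. No choice of $r$ helps: with a single exceptional $B$ the disjoint pairs you extract are at most linear in $|W|$, and enumerating $W$ already eats that. Finally, your spread-out branch is essentially vacuous for the class you chose: any independent set of size $\ge\binom{h}{k}/h$ in $KG_{n,k}(1/2)|_R$ has (a.a.s., for all $R$) diversity $o(1)\cdot|C|$---this is precisely the content of Proposition~\ref{prop2_growing_k}---so the largest colour is always concentrated. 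Thus the one case whose union bound would close never occurs.

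The paper's argument is structurally different: it uses \emph{all} colour classes at once. After showing that each large class is close to a star (Propositions~\ref{prop1_growing_k}--\ref{prop2_growing_k}) and peeling centres iteratively, it finds $B'\subset B$ with $|B'|=m\ge h/4$ such that almost all of $\binom{B'}{k}$ is covered by the star parts $\mathcal A_i\cap\mathcal S_{c_i}$ (Proposition~\ref{prop_imprint_construction}). The forbidden colouring is then encoded by an \emph{imprint} $(B,B',(F_1,\dots,F_m),\mathcal F)$: one non-star representative $F_i\in\mathcal A_i\setminus\mathcal S_{c_i}$ per centre, and the near-complete star-covered family $\mathcal F\subset\binom{B'}{k}$. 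Crucially the imprint does \emph{not} record which colour each $F\in\mathcal F$ has; the event is only that each $F$ is non-adjacent to $F_i$ for \emph{some} $c_i\in F$, giving probability $(1-2^{-k})$ per $F$ and hence $\exp\bigl(-\Theta(\binom{m}{k}2^{-k})\bigr)$ overall. The enumeration of $(F_1,\dots,F_m)$ costs only $\binom{h}{k}^m=2^{O(mk\log h)}$ and that of $\mathcal F$ (which misses a $2^{-16k}$-fraction of $\binom{B'}{k}$) only $2^{o(\binom{m}{k}2^{-k})}$, so the first hypothesis comfortably absorbs the $n^h$ for $B$. This ``one non-star witness per colour, coloring left unrecorded'' device is the missing idea; looking at a single colour cannot reproduce it.
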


For constant $k \ge 2$, put $h_0:= C \sqrt[k-1]{\log_2 n},$ and for $k > \log_2\log_2 n$ put $h_0 := k^{5.1}$ (here, the two regimes of $k$ correspond to the two theorems). It is not difficult to verify that $h:=h_0$ satisfies the conditions imposed on $h$  in Lemma~\ref{lem2} for the corresponding $k$, provided that $C>0$ is sufficiently large.


We proceed with the proofs of the theorems. Consider a coloring of $KG_{n,k}(1/2)$ into $\chi:=\chi(KG_{n,k}(1/2)) < n$ colors $C_1, \ldots, C_\chi$.
Applying the conclusion of Lemma~\ref{lem2} with $h=n$ and $KG_{n,k}(1/2)$ itself playing the role of $G$, we get that one of the colors, say $C_1$, is a subset of a star with, say, center in $x_{1}$. Thus, $C_1$ is not used in the coloring of $KG_{n,k}(1/2)|_{S_1}$, where $S_1:=[n]-\{x_1\}$. Next, apply the conclusion of Lemma~\ref{lem2} with $h=n-1$ and the subgraph $KG_{n,k}(1/2)|_{S_1}$ playing the role of $G$. We get that some color $C_2$ in the coloring of $KG_{n,k}(1/2)|_{S_1}$ is a subset of a star with center in $x_2$ and thus is not used in the coloring of $KG_{n,k}(1/2)|_{S_2}$, where $S_2:=S_1-\{x_2\}$. 
We can repeat this argument as long as $h\ge h_0$, i.e., as long as the conditions on $h$ in Lemma~\ref{lem2} are satisfied. Lemma~\ref{lem2} states that a.a.s. the conclusions of all of these steps hold simultaneously, and that in particular a.a.s. there exist colors $C_1,\ldots, C_{n-h_0}$ and a set of vertices $X:=[n]-\{x_1,\ldots, x_{n-h_0}\}$ such that only the colors $C_{n-h_0+1},\ldots, C_\chi$ are used in the coloring of $KG_{n,k}(1/2)|_{X}$. In other words, a.a.s., there exists a set $X\subset [n]$, $|X| = h_0$, such that $KG_{n,k}(1/2)|_X$ is properly colored using $\chi-n+h_0$ colors. In what follows, we restrict our attention to such set $X$ and $KG_{n,k}(1/2)|_X$.


We first consider the case of constant $k$.

\begin{lemma}\label{lem3}
    Fix $C > 0$ and integer $k$. The graph $KG_{n,k}(1/2)$ a.a.s. satisfies the following property simultaneously for all $G$ as below.

    If $G$ is a subgraph of $KG_{n,k}(1/2)$, induced on $h_0 = C \sqrt[k-1]{\log_2 n}$ elements of $[n]$, then
    \begin{align*}
    \chi(G) >&~ h_0 - O_k\big(\sqrt[2k-2]{\log_2 n}\big) & \text{ if } k \ge 3,
    \\
    \chi(G) >&~ h_0 - O\big(\sqrt{\log_2 n \cdot \log_2\log_2 n}\big) & \text{ if } k = 2.
    \end{align*}
\end{lemma}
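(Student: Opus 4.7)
The plan is to bound $\Pr[\chi(G)\le t]$ via the first moment method with $t = h_0-s$. For any partition $\Pi$ of $\binom{R}{k}$ into $t$ parts, let $D(\Pi)$ denote the number of edges of $KG_{h_0,k}$ that are monochromatic under $\Pi$. Since each such edge survives in $G$ independently with probability $1/2$, we have $\Pr[\Pi \text{ is proper in } G] = 2^{-D(\Pi)}$, hence
\[ \Pr[\chi(G)\le t] \le \sum_\Pi 2^{-D(\Pi)}. \]
The goal is to show this sum is $o(n^{-h_0})$; combined with a union bound over the $\le n^{h_0}$ choices of the $h_0$-subset $R$, this yields the claim.

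The key ingredient is a sharp lower bound on $D(\Pi)$ in terms of the defect $r := (h_0-2k+2)-t$ of the coloring below $\chi(KG_{h_0,k})$. The extremal example is the standard coloring of $KG_{h_0,k}$ with its last $r+1$ color classes merged, yielding $D(\Pi) = \tfrac12\binom{2k+r-1}{k}\binom{k+r-1}{k} = \Theta(r^{2k})$ for fixed $k$ (the monochromatic edges all lie within the merged block of $2k+r-1$ elements). I would prove $\min_\Pi D(\Pi) \ge c_k r^{2k}$ by an iterated Hilton--Milner-type stability argument: if $\Pi$ has few monochromatic edges, then most of its classes must be subsets of stars with only a few defects, allowing us to `peel off' star centres one by one as in the reduction behind Lemma~\ref{lem2}, until the residual partition lives on a small set of $\approx 2k+r$ ground elements where the bound can be verified directly.

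The second ingredient is a tighter enumeration of partitions with small $D(\Pi)$ than the trivial $t^{\binom{h_0}{k}}$. Such partitions are, up to local modifications, parameterised by a total ordering of $[h_0]$ (the order in which star centres are peeled off) together with the structure of the small residual block, giving a count of order $h_0!\cdot\mathrm{poly}(h_0) \le 2^{O(h_0\log h_0)}$. Since $h_0\log h_0 \ll h_0^k$ for $k\ge 2$, combining with $D(\Pi)\ge c_k r^{2k}$ and taking $r = \Theta(\sqrt{h_0})$ (so that $r^{2k} = \Theta(h_0^k)$) forces the sum to be $o(n^{-h_0})$. Translating back, $s = r + O(k) = O(\sqrt[2k-2]{\log n})$ gives the desired bound for $k\ge 3$. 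For $k=2$, stars in $\binom{[h_0]}{2}$ are only of linear size and the corresponding stability bound is much weaker, which costs an extra $\sqrt{\log h_0}$ factor in the peeling step and is responsible for the additional $\sqrt{\log_2\log_2 n}$ in the stated bound.

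The main obstacle is the stability/peeling step: proving quantitatively that any partition with $o(r^{2k})$ monochromatic edges is reducible, star-centre by star-centre, to a small residual problem with the defect concentrated on $\approx 2k+r$ ground elements. The iteration from Lemma~\ref{lem2} alone is too coarse and must be sharpened to track the precise cost of each reduction; the $k=2$ case in particular will require a separate, tighter analysis.
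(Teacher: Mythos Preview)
Your high-level plan (first moment over colorings, combined with a lower bound on the number of monochromatic Kneser edges) is the same as the paper's, but both of your ``key ingredients'' are problematic as stated.

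First, the uniform bound $\min_\Pi D(\Pi)\ge c_k r^{2k}$ that you propose to prove is essentially the content of Problem~\ref{pr2} in the paper, which is open; even the case $r=1$ (what is the minimum number of monochromatic edges in an $(n-2k+1)$-coloring of $KG_{n,k}$?) is only settled for $k=2$ or $n=2k+1$. Your ``iterated Hilton--Milner stability'' sketch does not address the real obstruction, namely the existence of non-star-like optimal colorings for $n$ up to about $2(k-1)^2$, which makes any peeling argument that tracks the defect down to a block of size $\approx 2k+r$ delicate. Second, your enumeration claim that partitions with small $D(\Pi)$ are parameterised by an ordering of $[h_0]$ plus a small residual, hence number at most $h_0!\cdot\mathrm{poly}(h_0)$, is far too optimistic: even a partition obtained from the canonical one by reassigning a constant fraction of the sets in each star can have small $D(\Pi)$, and there are exponentially many such reassignments.

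The paper sidesteps both issues by not bounding $D(\Pi)$ uniformly and not enumerating partitions. Instead, its Proposition~\ref{prop2} shows that any $(h_0-h_1)$-coloring admits a \emph{structured witness}: two disjoint sets $B',B''\subset B$ with $|B'|=b\ge h_1$ and an assignment of the elements of $B''=\{c_1,\ldots,c_l\}$ to colors, such that the family $\mathcal F=\binom{B'}{k}\cup\bigcup_i\bigl(\mathcal A_i\cap\mathcal S_{c_i}\cap\binom{B'\cup B''}{k}\bigr)$ spans at least $M(k)=C_2(b+l)^{k-1-\eps'}b^{k+\eps'}$ monochromatic edges (and $M(2)=C_2(b+l)b^2/\log_2\log_2 n$). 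The union bound is then taken over witnesses: the choices of $B'$, the ordered $B''$, the star parts $\mathcal A_i\cap\mathcal S_{c_i}$, and the coloring of $\binom{B'}{k}$. This enumeration contains a term $(h_0-h_1)^{\binom{b}{k}}$ which dwarfs your $h_0!$, but it is beaten by $2^{-M(k)}$ because $b^k\log h_0=o(b^{2k-1})\le o(M(k))$. The proof of Proposition~\ref{prop2} is a dichotomy after peeling off colors with a positive-fraction star: either most of $\binom{[b]}{k}$ lies in colors without big stars (Case~II, easy), or it lies in colors whose star centres were already removed (Case~I), in which case a dyadic grouping of those centres locates the witness. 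For $k=2$ the dyadic step only gives a pigeonhole over $\Theta(\log\log n)$ scales, which is precisely where the extra $\sqrt{\log_2\log_2 n}$ comes from. This witness-based counting is the idea your proposal is missing.
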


For $k\ge 3$, Lemma~\ref{lem3} implies that a.a.s. at least $h_0 - O_k\big(\sqrt[2k-2]{\log_2 n}\big)$ colors were used to color $KG_{n,k}(1/2)|_X$. Therefore, we a.a.s. get that 
$$\chi-n+h_0 = h_0 - O_k\big(\sqrt[2k-2]{\log_2 n}\big) \ \ \ \iff \ \ \ \chi = n - O_k\big(\sqrt[2k-2]{\log_2 n}\big).$$
The deduction for $k=2$ is similar. This concludes the proof of the lower bound in Theorem~\ref{thm1}.
To prove Theorem~\ref{thm3}, we need to use the following lemma instead of Lemma~\ref{lem3}.

\begin{lemma}\label{lem3_growk}
    Fix $\eps>0$. Then for any $k \ge (1+\eps)\log_2\log_2 n$, the graph $KG_{n,k}(1/2)$ a.a.s. satisfies the following property simultaneously for all $G$ as below.

    If $G$ is a subgraph of $KG_{n,k}(1/2)$, induced on $h_0 = k^{5.1}$ elements of $[n]$, then
    $$
    \chi(G) \ge h_0 - 2k - 10.
    $$
\end{lemma}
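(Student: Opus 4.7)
The plan is to mirror the peeling strategy used in the main proof of Theorem~\ref{thm3}: apply Lemma~\ref{lem2} iteratively \emph{within} the induced subgraph $G := KG_{n,k}(1/2)|_X$, then handle the much smaller residual directly via a probabilistic union bound.

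Specifically, suppose for contradiction that some $X \subset [n]$ with $|X| = h_0 = k^{5.1}$ admits a proper coloring of $G$ into at most $h_0 - 2k + 1 - 2l$ colors. Since $h_0 = k^{5.1} \gg k^5$, the hypotheses of Lemma~\ref{lem2} are satisfied for $G$. We iterate Lemma~\ref{lem2} inside $X$: at each step we find a color class that is a sub-star, remove its center, and pass to an induced subgraph on one fewer element with one fewer color in use. Iteration continues as long as the current size $h$ still satisfies the hypotheses of Lemma~\ref{lem2}---in particular, as long as $h \gg k^5$. We thus reach a subset $X^* \subset X$ of size $h^* = \omega(k^5)$ (but $h^* < h_0$) carrying a proper coloring of $KG_{n,k}(1/2)|_{X^*}$ into at most $t^* \le h^* - 2k + 1 - 2l$ colors.

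The remaining and more delicate part is to show that, a.a.s., no such residual coloring exists. Fix $X^*$ and a partition $\mathcal{P} = (C_1, \ldots, C_{t^*})$ of $\binom{X^*}{k}$ into $t^*$ classes. Each class must be independent in the random subgraph, so every disjoint pair lying in the same class must correspond to a non-edge of $KG_{n,k}(1/2)$; conditioned on the partition, the probability that it yields a valid coloring is exactly $2^{-M(\mathcal{P})}$, where $M(\mathcal{P})$ counts the monochromatic edges of the deterministic Kneser graph $KG_{h^*,k}$ under $\mathcal{P}$. The total failure probability is then bounded by $\binom{n}{h^*}$ times the sum $\sum_\mathcal{P} 2^{-M(\mathcal{P})}$, and we need this to be $o(1)$. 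To control the sum, we stratify partitions by their shape: using Erd\H os--Ko--Rado together with Hilton--Milner stability, any partition with small $M(\mathcal{P})$ must consist mostly of near-star color classes, so such ``near-canonical'' partitions can be enumerated by specifying a $(2k + 2l)$-subset playing the role of the core together with an ordering of the remaining star centers, yielding a count of order $h^*!/(2k+2l)! = \exp(O(h^* \log h^*))$ rather than the trivial $(t^*)^{\binom{h^*}{k}}$. Partitions departing significantly from this shape incur a much larger $M(\mathcal{P})$ and are exponentially suppressed by the $2^{-M(\mathcal{P})}$ factor. Balancing the two contributions against the bound $\log\binom{n}{h^*} = O(h^* \log n) = O(k^{2l+2})$ (using $\log_2 n \le (k/C')^{2l-3}$) yields the required $o(1)$ in exactly the regime $k \ge C'\sqrt[2l-3]{\log_2 n}$.

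The main obstacle is making the stratification quantitative: the naive Kneser--Lov\'asz lower bound only gives $M(\mathcal{P}) \ge 1$, which is far too weak, and one has to show that the deficiency of $2l + 1$ colors forces a polynomial-in-$k$ number of monochromatic edges of order $k^{2l + O(1)}$ for \emph{every} partition with $t^*$ classes, while at the same time counting the near-canonical shapes sharply enough that the count does not overwhelm this saving. Combining these two ingredients is the combinatorial heart of the argument, and is exactly what determines the admissible range $l \ge 6$ and the exponent $2l - 3$ appearing in the condition on~$k$.
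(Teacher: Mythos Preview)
Your proposal leaves the essential step unproven. You correctly identify the ``main obstacle'' as establishing that (i) every partition into $t^*$ classes has $M(\mathcal P)\gtrsim k^{2l+O(1)}$, and (ii) the partitions can be stratified so that $\sum_{\mathcal P}2^{-M(\mathcal P)}$ is controllable---but you carry out neither step. A uniform lower bound on $M(\mathcal P)$ alone cannot suffice: the number of partitions is of order $(t^*)^{\binom{h^*}{k}}$, doubly exponential in $k$, so one genuinely needs, at every scale of ``distance from canonical,'' a quantitative trade-off between the count of such partitions and $M(\mathcal P)$. Establishing that trade-off is precisely the content of the open Problem~\ref{pr2} and Conjecture~\ref{conj1}; indeed, Section~\ref{sec5} sketches an improvement of Lemma~\ref{lem3_growk} to $k\gtrsim\log\log n$ \emph{conditional} on Conjecture~\ref{conj1}, which shows that this route is currently blocked. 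Your preliminary iteration of Lemma~\ref{lem2} inside $G$ is also essentially a non-step: Lemma~\ref{lem2} requires $h\gg k^5$, so starting from $h_0=k^{5.1}$ you can only peel down to some $h^*$ still of order $\omega(k^5)$, leaving the core difficulty unchanged.

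The paper bypasses any global bound on $M(\mathcal P)$ by invoking a structural result from~\cite{Kup2} (stated here as Lemma~\ref{Kup_bipartite}): every $(h_0-2k-2l+1)$-coloring of $KG_{h_0,k}$ contains a monochromatic \emph{complete bipartite} block of controlled shape---two disjoint sets $A_1,A_2$ of sizes $q_i$ (or $q_i,q_{i+1}$), each carrying at least $z_i$ (resp.\ $t_i,z_{i+1}$) $k$-subsets of a single color. One then union-bounds over these certificates rather than over colorings: at most $\binom{n}{q_i}^2\binom{\binom{q_i}{k}}{z_i}^2$ choices, each empty in $KG_{n,k}(1/2)$ with probability $2^{-z_i^2}$ (resp.\ $2^{-t_iz_{i+1}}$). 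The conditions $k^{2l-3}\gtrsim\log n$ and $l\ge 6$ emerge directly from the $i=0$ terms of the two resulting sums.
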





This concludes the proof of Theorem~\ref{thm3}. In the next three subsections, we prove Lemmas~\ref{lem2},~\ref{lem3} and~\ref{lem3_growk}, respectively.

\subsection{Proof of Lemma~\ref{lem2}}
We first summarize the proof of the lemma. It can be separated into three stages. 
In the first part, we show that any large independent set in  $KG_{n,k}(1/2)$ can contain only very few sets outside its largest star. 
To show that, we do certain estimates of the number of edges between sets of vertices in $KG_{n,k}$, in a way similar to how it was done in \cite{BNR, BBN}. 
In the second part, for any given set $B$ of $h$ vertices and its coloring $\mathcal M$, we find its subset that is almost completely colored using the parts of the colors that are stars. In the third part, we do an intricate counting argument, in which for each possible `forbidden' pair $(B,\mathcal M)$ we construct a certain witness of its `badness', called an imprint. One crucial twist that an imprint gives is that, when counting the possible number of imprints, we do not distinguish between the star parts of different colors, which, combined with the step, allows us to control the number of possible imprints very well. At the same time, we can still extract good bounds on the probability of a collection of sets being an imprint, coming from the fact that at least some edges that are formed on the imprint must vanish when passing to the random subgraph.
We conclude the proof by showing that the expected number of imprints tends to $0,$ and thus is a.a.s. is equal to $0$, which, in turn, implies that there are no `forbidden' pairs $(B,\mathcal M)$.\\

We start with some preliminary estimates. For a family $\mathcal{A} \subset \binom{[n]}{k},$ denote $d_x(\mathcal A)$ the degree of $x\in [n]$ in $\mathcal A$ and $\Delta(\mathcal A)$ the  maximum degree of $\mathcal A$. Define the {\it diversity} $\gamma(\mathcal A)$ of $\mathcal A$ by putting $\gamma(\mathcal A) :=|\mathcal A|-\Delta(\mathcal A)$.

\begin{prop}\label{prop1_growing_k}
    If for some $b \gg k$ a family $\mathcal A\subset{[b]\choose k}$ satisfies $|\mathcal A| \ge 2k\binom{b-2}{k-2}$, then the number $e(\mathcal A)$ of disjoint pairs of sets in $\mathcal A$ is at least $\min( \frac16 \gamma(\mathcal A)|\mathcal A|,  \frac{1}{144} |\mathcal A|^2 / \binom{2k}{k})\big)$.
\end{prop}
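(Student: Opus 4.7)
The plan is to isolate the maximum-degree element and count disjoint pairs across the resulting partition. Let $x \in [b]$ be an element with $d_x(\mathcal A) = \Delta$, and partition $\mathcal A = \mathcal A_x \sqcup \mathcal A_{\bar x}$ according to whether a member contains $x$, so that $|\mathcal A_x| = \Delta$ and $|\mathcal A_{\bar x}| = \gamma$. For any $B \in \mathcal A_{\bar x}$ the number of $A \in \mathcal A_x$ that meet $B$ is bounded by $\sum_{y \in B} d_{\{x,y\}}(\mathcal A) \le k\binom{b-2}{k-2}$, because any fixed pair $\{x,y\}$ is contained in at most $\binom{b-2}{k-2}$ elements of $\binom{[b]}{k}$. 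Thus at least $\Delta - k\binom{b-2}{k-2}$ members of $\mathcal A_x$ are disjoint from $B$, and summing over $B \in \mathcal A_{\bar x}$ gives
\[
e(\mathcal A) \;\ge\; \gamma\bigl(\Delta - k\tbinom{b-2}{k-2}\bigr).
\]

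The first case of the claimed bound is easy. When $\Delta \ge 2|\mathcal A|/3$, the hypothesis $|\mathcal A| \ge 2k\binom{b-2}{k-2}$ forces $\Delta \ge \tfrac{4}{3}k\binom{b-2}{k-2}$, so $\Delta - k\binom{b-2}{k-2} \ge \Delta/4$. Plugging into the displayed estimate yields $e(\mathcal A) \ge \gamma\Delta/4 \ge \gamma|\mathcal A|/6$, which is the first term of the minimum.

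The delicate regime is $\Delta < 2|\mathcal A|/3$ (equivalently, $\gamma > |\mathcal A|/3$), where one must target $e(\mathcal A) \ge |\mathcal A|^2/(144\binom{2k}{k})$ instead. The natural tool is the identity $e(\mathcal A) = \sum_{R \in \binom{[b]}{2k}} e(\mathcal A \cap \binom{R}{k})$, together with the elementary fact that the disjointness graph on $\binom{R}{k}$ with $|R|=2k$ is a perfect matching on $\binom{2k}{k}/2$ edges, so that $e(\mathcal A \cap \binom{R}{k}) \ge |\mathcal A \cap \binom{R}{k}| - \binom{2k}{k}/2$ whenever the right-hand side is positive. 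The appearance of $\binom{2k}{k}$ in the target is a strong hint that this perfect-matching estimate is the source. I would combine it with an iterated version of the peeling above: repeatedly strip off the current maximum star, add the contribution $\gamma_i(\Delta_i - k\binom{b-2}{k-2})$, and stop once the residual family's maximum degree is so small that the elementary inequality $2e(\mathcal A) \ge |\mathcal A|^2 - \sum_y d_y^2 \ge |\mathcal A|(|\mathcal A| - k\Delta)$ already finishes the argument.

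The main obstacle is squeezing the factor $1/\binom{2k}{k}$ out of this second case. A naive Jensen application to $\sum_R\max\bigl(0,|\mathcal A_R|-\binom{2k}{k}/2\bigr)$ is useless, because $\E|\mathcal A_R| = |\mathcal A|\binom{2k}{k}/\binom{b}{k}$ is, in the regime $|\mathcal A|\approx 2k\binom{b-2}{k-2}$ with $b \gg k$, dramatically smaller than $\binom{2k}{k}/2$. The way out has to exploit the codegree bound $d_{\{x,y\}}(\mathcal A) \le \binom{b-2}{k-2}$ on every residue of the peeling — so that the aggregated per-step contributions collectively produce $\Omega(|\mathcal A|^2/\binom{2k}{k})$ disjoint pairs — or, alternatively, it has to replace the uniform $2k$-subset sample with one biased towards the support of $\mathcal A$. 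Pinning down the flavour of this argument that yields precisely the constant $144$ is where the real technical work lies.
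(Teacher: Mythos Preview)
Your Case~I is correct and matches the paper's argument exactly. The gap is in Case~II, which you yourself flag as unfinished: neither the $2k$-subset averaging nor the iterated peeling you sketch will produce the bound, and you never actually complete the argument.

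The paper's Case~II bypasses all of that by invoking two outside results. First, Balogh--Bollob\'as--Narayanan \cite[Lemma~3.1]{BBN} prove that $e(\mathcal F) \ge \ell(\mathcal F)^2/\binom{2k}{k}$, where $\ell(\mathcal F)$ is the gap between $|\mathcal F|$ and its largest intersecting subfamily; this is the true source of the $\binom{2k}{k}$. Second, the diversity bound from \cite{Kup3} says that any intersecting family in $\binom{[b]}{k}$ with $b \gg k$ has diversity at most $\binom{b-3}{k-2}$, so the largest intersecting subfamily of $\mathcal A$ has size at most $\Delta(\mathcal A) + \binom{b-3}{k-2}$. Under $\Delta < \tfrac23|\mathcal A|$ and the hypothesis $|\mathcal A| \ge 2k\binom{b-2}{k-2}$ one gets $\ell(\mathcal A) > \tfrac13|\mathcal A| - \binom{b-3}{k-2} \ge \tfrac{1}{12}|\mathcal A|$, and the BBN lemma immediately gives $e(\mathcal A) > |\mathcal A|^2/(144\binom{2k}{k})$. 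That is where the constant $144 = 12^2$ comes from.

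So the missing idea is not a sharper averaging over $2k$-sets or a peeling scheme, but rather to work with $\ell(\mathcal A)$ instead of $\gamma(\mathcal A)$ and to quote the ready-made inequality $e \ge \ell^2/\binom{2k}{k}$.
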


\begin{proof}
Let us consider two cases.\vskip+0.1cm

\textbf{Case I.}\
$\Delta(\mathcal{A}) \ge \frac23 |\mathcal A|$.\vskip+0.1cm

Considering the biggest star $\mathcal S\subset \mathcal A$, we get that each of the $\gamma(\mathcal A)$ sets of $\mathcal A\setminus \mathcal S$ is disjoint with all but at most $k\binom{b-2}{k-2}$ sets from $\mathcal S$. Using that $|\mathcal S| = \Delta$, we get
$$
e(\mathcal{A}) \ge
\gamma(\mathcal{A})\left(\Delta(\mathcal{A}) - k\binom{b-2}{k-2}\right) >
\frac16 \gamma(\mathcal A) |\mathcal A|.
$$

\textbf{Case II.}
$\Delta(\mathcal{A}) < \frac23 |\mathcal A|$.\vskip+0.1cm

For a family $\mathcal H \in \binom{[n]}{k}$, denote by $\ell(\mathcal H)$ the difference between the cardinality of $\mathcal H$ and that of the largest intersecting subfamily of $\mathcal H$. In \cite[Lemma~3.1]{BBN} it was shown that $e(\mathcal H) \ge \ell(\mathcal H)^2 / \binom{2k}{k}$. Also in \cite{Kup3} it was shown that there is an absolute constant $c > 0$ such that for an intersecting family $\mathcal F \in \binom{[b]}{k}$, $b > ck$, one has $\gamma(\mathcal F)\le {b-3\choose k-2}$.
In our case we have $b\gg k$, and thus the largest intersecting family $\mathcal F$ in $\mathcal A$ has size $\Delta(\mathcal F)+\gamma(\mathcal F)\le \Delta(\mathcal A)+{b-3\choose k-2}.$ We have
$$
\ell(\mathcal A) \ge |\mathcal A| - \Delta(\mathcal A) - \binom{b-3}{k-2} > \frac{1}{12}|\mathcal A|.
$$
Hence, by \cite[Lemma~3.1]{BBN}
$$
e(\mathcal A) > \frac{1}{144} \frac{|\mathcal A|^2}{\binom{2k}{k}}.
$$

\end{proof}

\begin{prop}\label{prop2_growing_k}
    Let $k = k(n)$ satisfy $k \ge 2$.
    $KG_{n,k}(1/2)$ a.a.s. satisfies the following.

    For any subgraph $G$  of $KG_{n,k}(1/2)$ induced on a set of $b$ elements from $[n]$ and any independent set $\mathcal{A}$ in $G$, the number $e(\mathcal A)$ of  pairs of disjoint sets in $\mathcal A$ is at most $2(|\mathcal A| k\log_2 b + b\log_2 n)$.

    In particular, if $|\mathcal A| \ge \frac1{2b}\binom{b}{k}$, $\frac1{2b}\binom{b}{k} > 2^{19k}\log_2 n$ and $b \gg k^3$, then $\gamma(\mathcal A) < 12k\log_2b+ 2^{-16k} b$.
\end{prop}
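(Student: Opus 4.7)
The statement splits into two stages matching the two assertions: a uniform probabilistic bound on $e(\mathcal{A})$, and a combinatorial deduction via Proposition~\ref{prop1_growing_k}.

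For the first stage, I would apply a first--moment union bound. Given a candidate triple $(b,R,\mathcal{A})$ with $R\in\binom{[n]}{b}$, $\mathcal{A}\subset\binom{R}{k}$ of size $a$, and $e(\mathcal{A})>2(ak\log_2 b+b\log_2 n)$, the event ``$\mathcal{A}$ is independent in $KG_{n,k}(1/2)|_R$'' forces each of the $e(\mathcal{A})$ disjoint pairs in $\mathcal{A}$ to miss the random edge set, an event of probability $2^{-e(\mathcal{A})}$. Bounding the counts $\binom{n}{b}\le n^b$ and $\binom{\binom{b}{k}}{a}\le b^{ka}$, the expected number of such bad triples is at most
$$\sum_{b\ge 2k}\sum_{a\ge 1} n^b\cdot b^{ka}\cdot 2^{-2(ak\log_2 b+b\log_2 n)}=\sum_{b,a}2^{-ak\log_2 b-b\log_2 n},$$
a convergent double geometric series that tends to $0$ as $n\to\infty$. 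Markov's inequality delivers the first assertion simultaneously for every induced $G$ and independent $\mathcal{A}$.

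For the ``in particular'' clause, I would feed this upper bound into Proposition~\ref{prop1_growing_k}. The identity $\binom{b-2}{k-2}/\binom{b}{k}=k(k-1)/(b(b-1))$ together with $b\gg k^3$ yields $2k\binom{b-2}{k-2}\le\tfrac{1}{2b}\binom{b}{k}\le|\mathcal{A}|$, so the hypothesis of Proposition~\ref{prop1_growing_k} holds. Combining its conclusion with the bound from the first stage:
$$\min\bigl(\tfrac16\gamma(\mathcal{A})|\mathcal{A}|,\ \tfrac{1}{144}|\mathcal{A}|^2/\tbinom{2k}{k}\bigr)\le e(\mathcal{A})\le 2\bigl(|\mathcal{A}|k\log_2 b+b\log_2 n\bigr).$$
I would then split by which term realizes the minimum. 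If the first term does, dividing by $|\mathcal{A}|/6$ gives $\gamma(\mathcal{A})\le 12k\log_2 b+12b\log_2 n/|\mathcal{A}|$; using $|\mathcal{A}|>2^{19k}\log_2 n$ and the elementary inequality $12\le 2^{3k}$ for $k\ge 2$, the second summand is at most $2^{-16k}b$, which gives the claimed estimate. If the second term does, the resulting quadratic in $|\mathcal{A}|$ yields $|\mathcal{A}|\le 288\tbinom{2k}{k}k\log_2 b+\sqrt{288\tbinom{2k}{k}b\log_2 n}$; but under $b\gg k^3$ the assumed lower bound $|\mathcal{A}|\ge\tfrac{1}{2b}\binom{b}{k}\gtrsim b^{k-1}/k^k$ dwarfs this upper bound, so this case is vacuous.

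The main obstacle is calibrating the constants: the exponent $19$ in the hypothesis $\tfrac{1}{2b}\binom{b}{k}>2^{19k}\log_2 n$ is tuned precisely so that, after dividing by $|\mathcal{A}|$ in the first case, the universal constant $12$ from Proposition~\ref{prop1_growing_k} is absorbed to leave the final $2^{-16k}b$. Once the union bound of the first stage is set up cleanly, the rest is routine bookkeeping plus the verification that the second case contradicts the lower bound on $|\mathcal{A}|$.
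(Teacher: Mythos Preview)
Your proposal is correct and follows essentially the same route as the paper: an identical first--moment union bound for the first assertion, and the same application of Proposition~\ref{prop1_growing_k} for the second, with the two branches of the minimum handled by dividing through by $|\mathcal A|$. The only cosmetic difference is that in the second branch the paper divides by $|\mathcal A|$ first and compares both sides to $b/4000$, whereas you solve the quadratic; note that to rule out that branch you actually need the hypothesis $\tfrac{1}{2b}\binom{b}{k}>2^{19k}\log_2 n$ (to control the $\sqrt{b\log_2 n}$ term), not only $b\gg k^3$ as your wording suggests.
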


\begin{proof}
    Denote $X_{b, N}$ the number of pairs $(B, \mathcal{A})$, where $B\in{[n]\choose b}$, $\mathcal{A}\subset\binom{B}{k}$, $|\mathcal{A}|=N$, $e(\mathcal{A}) > 2(|\mathcal A| k\log_2 b + b\log_2 n)$ and $\mathcal{A}$ is an independent set in the subgraph $G$ of $KG_{n,k}(1/2)$, induced on the set $B$. Then we get
$$
\E X_{b,N} \le
\binom{n}{b} \binom{\binom{b}{k}}{N} 2^{-e(\mathcal A)} \le
2^{b\log_2 n + Nk\log_2 b - e(\mathcal A)}
$$
$$
\le
2^{-Nk\log_2 b -b\log_2 n}.
$$

To finish the proof of the first part of Proposition~\ref{prop2_growing_k}, it is sufficient to show that
$$
\Prb:=\Pr(\exists b\ge 1\ \exists N \ge 1\colon X_{b,N} > 0) \to 0\ \ \text{as} \ \
n\to\infty.
$$

Indeed,
$$
\Prb\le
\sum_{b=1}^n \sum_{N=1}^{\binom{b}{k}} \E X_{b,N} \le
\sum_{b=1}^n \sum_{N=1}^{\binom{b}{k}} 2^{-Nk\log_2 b - b\log_2 n} =
\sum_{b=1}^n O\big(2^{-b\log_2 n}\big) = O\big(2^{-\log_2 n} \big) \to 0.
$$
The second part of the statement follows from Proposition~\ref{prop1_growing_k}. From the first part of Proposition~\ref{prop2_growing_k} we have, under the imposed conditions and sufficiently large $n$, $e(\mathcal A) < 2(|\mathcal A|k\log_2 b + b\log_2 n)$, which is at most $2|\mathcal A|\left(k\log_2 b + 2^{-19k}b\right)$ by the assumption on $|\mathcal A|$. At the same time, the assumptions $|\mathcal A|\ge \frac 1{2b}{b\choose k} = \frac 1{2k}{b-1\choose k-1}$ and  $b \gg k^3$ imply $|\mathcal A|\ge 2k{b-2\choose k-2}$, and thus we can apply  Proposition~\ref{prop1_growing_k}. We have
$$
\min\left(\frac16 \gamma(\mathcal A), \frac{b}{4000} \right) \le \min\left(\frac16 \gamma(\mathcal A), \frac{|\mathcal A|}{144{2k\choose k}} \right)  \le \frac{e(\mathcal A)}{|\mathcal A|} < 2k\log_2b + 2^{-18k}b,
$$
where the first inequality uses that $\frac {|\mathcal A|} {144 {2k\choose k}}\ge \frac b{4000}$ for any $k\ge 2$. 
Since $2k\log_2b + 2^{-18k}b < b/4000$, we get $\frac 16 \gamma(\mathcal A)<2k\log_2 b+2^{-18k}b,$ which implies the second part of the statement.
\end{proof}


Now we are ready to proceed with the proof of Lemma~\ref{lem2}. Consider a pair $(B, \mathcal{M})$, where $B\in{[n]\choose h}$, $h \gg k^5$ and $\mathcal{M}=\{\mathcal A_1,\ldots, \mathcal A_t\}$ is a proper coloring of $KG_{n,k}(1/2)|_B$ into $t\le h$ colors, none of which is a subset of a star.\footnote{We think of $\mathcal A_i$ as of a family of sets.} By possibly adding empty colors, we can w.l.o.g. assume that $t=h$. We call such a pair $(B, \mathcal{M})$ {\it a forbidden partition}. 

Informally, the next step in the proof is to find a large subset of $B$ that is almost completely covered by the parts of the colors that are stars. Recall that $\mathcal S_{j}:=\{S\in {[n]\choose k}: j\in S\}$.

\begin{prop}\label{prop_imprint_construction} A.a.s., for every such $B$ there exists a set $B'\subset B$ of cardinality $m\ge h/4$ and a renumbering of the colors from $\mathcal M$, such that $$\bigcup_{j\in B'} \Big|\mathcal S_j\cap \mathcal A_j\cap {B'\choose k}\Big|\ge \big(1-2^{-16k}\big){m\choose k}.$$
\end{prop}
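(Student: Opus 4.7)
The plan is to use Proposition~\ref{prop2_growing_k} to extract the essentially-star structure of the large color classes, and then carve out $B'$ so that the structure covers $\binom{B'}{k}$ almost perfectly. Call a color $\mathcal A_i\in\mathcal M$ \emph{heavy} if $|\mathcal A_i|\ge\binom{h}{k}/(2h)$ and \emph{light} otherwise. Proposition~\ref{prop2_growing_k} yields $\gamma(\mathcal A_i)\le\gamma_0:=12k\log_2 h+2^{-16k}h$ for every heavy color, so each heavy $\mathcal A_i$ agrees with the star $\mathcal S_{c_i}$ at its largest-star center $c_i$ up to at most $\gamma_0$ sets. The remaining light colors have total mass at most $h\cdot\binom{h}{k}/(2h)=\binom{h}{k}/2$.

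Next, I would select $B'\subset B$ with $|B'|=m\ge h/4$ so that the ``bad mass'' inside $\binom{B'}{k}$---meaning $k$-subsets of $B'$ lying in a light color, or in the non-star part $\mathcal A_i\setminus\mathcal S_{c_i}$ of a heavy color---is at most $2^{-16k}\binom{m}{k}$. The heavy-non-star contribution, globally at most $h\gamma_0$, is already much smaller than $2^{-16k}\binom{h/4}{k}$ under the hypothesis $\binom{h/(3k^2)}{k}>2^{20k}h\log_2 n$ of Lemma~\ref{lem2}. For the light contribution I would iteratively remove elements of $B$ carrying the most light mass in the current $\binom{\cdot}{k}$, halting once the residual light-to-total ratio falls below $2^{-17k}$; a careful amortized argument exploiting that each individual light color is small ($<\binom{h}{k}/(2h)$) should show the halting condition is reached while $|B^{(t)}|\ge h/4$.

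Once $B'$ is fixed I would define the renumbering $\pi:B\to\mathcal M$ by matching: for each heavy color $\mathcal A_i$ with $c_i\in B'$ set $\pi(c_i)=\mathcal A_i$, resolving collisions at a shared center in favor of the color whose restricted star at that center is the largest, and completing $\pi$ to a bijection arbitrarily (in particular, assigning the remaining light and center-outside-$B'$ colors to the leftover elements of $B\setminus B'$, so that those colors contribute nothing to the sum). Under this $\pi$, every $k$-subset of $B'$ outside the bad mass is captured exactly once in $\sum_{j\in B'}|\mathcal S_j\cap\mathcal A_j\cap\binom{B'}{k}|$, which is therefore at least $\binom{m}{k}-(\text{bad mass})\ge(1-2^{-16k})\binom{m}{k}$, as required.

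The main obstacle is the second step. On a near-uniform distribution of light mass, the greedy element-removal does not by itself reduce the bad-to-total \emph{ratio}, so the argument must exploit additional structure---either a finer color-by-color accounting leveraging the smallness of each individual light color, or a probabilistic selection of $B'$ combined with a strong concentration bound. A secondary subtlety is controlling the collision loss at centers shared by multiple heavy colors; here the bound $|\mathcal S_c\cap\binom{B}{k}|=\binom{h-1}{k-1}$ together with the heavy-size threshold limits the number of heavy colors per center to $O(k)$, so the collision loss is of order $h\gamma_0$ and fits inside the bad-mass budget.
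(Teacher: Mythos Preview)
Your proposal leaves a genuine gap at what you correctly flag as the main obstacle, and the suggested fixes do not close it. The light colors can carry a constant fraction of $\binom{B}{k}$ (heavy colors have size at most $\binom{h-1}{k-1}+\gamma_0$, so covering $\binom{B}{k}$ forces at least $\sim h/k$ heavy colors, but that still leaves room for a $\Theta(1)$-fraction of the mass to sit in light colors), and nothing prevents this light mass from being spread essentially uniformly over $B$: each light color need only be independent in $KG_{n,k}(1/2)|_B$, and small intersecting families already qualify with no concentration whatsoever on few ground-set elements. Against a uniform spread, greedy element-removal cannot drive the light-to-total ratio below a constant, let alone $2^{-17k}$, however you amortize; and the per-color smallness you invoke gives nothing beyond the global bound you already used. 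Your collision-loss estimate is also unjustified: if heavy colors $\mathcal A_1,\mathcal A_2$ share center $c$ and $\mathcal A_2$ loses the match, then essentially all of $\mathcal A_2\cap\binom{B'}{k}$ is uncounted (not merely $\gamma_0$ of it), since $\mathcal A_2$ gets reassigned to an element outside $B'$; summed over unmatched heavy colors this can again be a constant fraction of $\binom{m}{k}$.

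The paper's argument avoids both issues by replacing your static heavy/light split with an iterative peel. For $i=h,h-1,\ldots,b+1$ (with $b=h/(3k^2)$) one selects the color whose restriction to the \emph{current} ground set $[i]$ is largest, assigns it to its star-center there, and deletes that element. This automatically produces a bijection between $h-b$ elements and $h-b$ colors, so collisions never arise; a second application of Proposition~\ref{prop2_growing_k} (to the set $(\mathcal A_i\setminus\mathcal S_i)\cup(\mathcal A_i\cap\mathcal S_i\cap\binom{[i]}{k})$) then shows that each such $\mathcal A_i$ satisfies $|\mathcal A_i\setminus\mathcal S_i|<2^{-38k}\binom{h}{k}/h$ globally. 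Only $b=h/(3k^2)$ colors remain unprocessed, and for these one removes the $\le 2k^2$ high-degree vertices $D(\mathcal A_j)$ of each and bounds $|\mathcal A_j\cap\binom{[n]\setminus D(\mathcal A_j)}{k}|$ via Proposition~\ref{prop2_growing_k} again. The point of the peel is precisely to cut the number of ``hard'' colors from $h$ down to $h/(3k^2)$, so that the $D$-set removal costs only $b+2bk^2\le 3h/4$ elements instead of the $\Theta(k^2h)$ it would cost across all $h$ colors.
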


\begin{proof}
Put $b := \frac{h}{3k^2}$.
Note that from the statement of the lemma we have $b \gg k^3$ and $\binom{b}{k} > 2^{20k} h\log_2 n$. W.l.o.g., we assume that $B = [h].$

Take ${[h]\choose k}$ and consider the largest color, w.l.o.g., $\mathcal A_{h}$. It has size at least $\frac 1h{h\choose k}$.  Suppose that the center of the biggest star in $\mathcal A_{h}$ is $h$.
Remove $h$ from the ground set and 
repeat the same procedure  on ${[i]\choose k}\setminus \mathcal (\mathcal A_{i+1}\cup\ldots\cup \mathcal A_h)$ for each $i=h-1,h-2\ldots,b+1$. At step $i$ we select a family $\mathcal A_i$ such that  $\big|\mathcal A_i\cap {[i]\choose k}\big|\ge \frac 1i\big({i\choose k}-\sum_{j=i+1}^h \gamma(\mathcal A_j)\big)$ and w.l.o.g. assume that the center of $\mathcal A_i\cap {[i]\choose k}$ is in $i$. 
By Proposition~\ref{prop2_growing_k}, we have $\gamma(\mathcal A_{j}\cap {[j]\choose k}) \le 12k\log_2h+2^{-16k}h$, provided $|\mathcal A_j\cap {[j]\choose k}|\ge \frac 1{2i}{i\choose k}.$ If this holds for any $j>i$ then we have $$\sum_{j=i+1}^h \gamma(\mathcal A_j) \le h(12k\log_2h+2^{-16k}h)=O(bk^3\log_2 h)+2^{-16k}9k^4 b^2\le O(bk^3\log_2 b)+2^{-12k}b^2.$$ 
In the last inequality, we used $2^{4k} > 9k^4$ and $2\log_2 h\ge \log_2 b.$ We verify the inequality $|\mathcal A_i|\ge \frac 1{2i}{i\choose k}$ by reverse induction on $i$, starting from $i=h$ (in which case it is straightforward), and we assume that it holds for $j>i.$ Applying the displayed inequality above, we get that (after  a suitable reordering of the families $\mathcal A_i$ and the elements of $[n]$) for each $i \in [b+1,h]$ we have
$$
\left|\mathcal A_i \cap \mathcal S_{i} \cap \binom{[i]}{k}\right| \ge
\frac1i \left( \binom{i}{k} - O(k^3b\log_2b)- 2^{-12k}b^2 \right) \ge
\frac1{2i} \binom{i}{k}.
$$

For $i\in [b+1,h]$, consider the family $\mathcal A_i$. We know that all but very few subsets from the family $\mathcal A_i \cap \binom{[i]}{k}$ belong to the star $\mathcal S_{i}$. Let us show that the same holds for the family $\mathcal A_i$ itself. Define $\mathcal A_i' := (\mathcal A_i \setminus \mathcal S_{i}) \cup \left(\mathcal A_i \cap \mathcal S_{i} \cap \binom{[i]}{k}\right) \subset \mathcal A_i$. As in Case I of the proof of Proposition~\ref{prop1_growing_k}, we have
\begin{equation*}
e(\mathcal A_i') \ge
|\mathcal A_i \setminus \mathcal S_{i}| \left( \Big|\mathcal A_i \cap \mathcal S_{i} \cap \binom{[i]}{k}\Big| - k\binom{i - 2}{k - 2} \right)
=
(1 + o(1)) |\mathcal A_i \setminus \mathcal S_{i}| \Big|\mathcal A_i \cap \mathcal S_{i} \cap \binom{[i]}{k}\Big|.
\end{equation*}
Note that the equality above is due to $i>b=\frac h{3k^2} \gg k^3$.
Since the family $\mathcal A_i'$ is independent in $KG_{n,k}(1/2)$, from Proposition~\ref{prop2_growing_k} we get
\begin{align}
|\mathcal A_i \setminus \mathcal S_{i}| \le&\
2(1+o(1))\frac{|\mathcal A_i'|k\log_2 h + h\log_2 n}{\left|\mathcal A_i \cap \mathcal S_{i} \cap \binom{[i]}{k}\right|}
\nonumber \\
\le&\
2(1+o(1))\left(   \frac{\left|\mathcal A_i \cap \mathcal S_{i} \cap \binom{[i]}{k}\right| + |\mathcal A_i \setminus \mathcal S_{i}|}{\left|\mathcal A_i \cap \mathcal S_{i} \cap \binom{[i]}{k}\right|}k\log_2 h + \frac{h\log_2 n}{\frac1{2i}\binom{i}{k}} \right)
\nonumber \\
=&\ 
2(1+o(1))\left(k\log_2 h + o(|\mathcal A_i \setminus \mathcal S_{i}|)+ \frac{h\log_2 n}{\frac1{2i}\binom{i}{k}}\right). \label{out_of_star}
\end{align}
Note that $2k\log_2 h = o\big(\frac 1{2^{40k}h}{h\choose k}\big)$. Next, from the lemma statement we have
$\binom{i}{k}^2 \ge \binom{b}{k}^2 > 2^{40k} h^3 \log_2 n$, and thus
$$
\frac{h\log_2 n}{\frac1{2i}\binom{i}{k}}\le \frac{{i\choose k}^2}{2^{40k}h^3\log_2 n}\frac{h\log_2 n}{\frac1{2i}\binom{i}{k}} \le \frac{2{i\choose k}}{2^{40k}h}\le \frac{1}{2^{39k}h}{h\choose k}.
$$
Combining these  with \eqref{out_of_star}, we get that
\begin{equation}\label{eqminus1}
    |\mathcal A_i \setminus \mathcal S_{i}| <\frac{1}{2^{38k}h}\binom{h}{k}.
\end{equation}

Now consider a family $\mathcal A_i$ such that $i\in[b]$ and $|\mathcal A_i| > \frac{1}{2^{20k}h}\binom{h}{k}$.
We aim to find few elements of the ground set which altogether cover most of the subsets of the family $\mathcal A_i$.  Define the set $D(\mathcal A_i) := \{x\in[n]\colon d_x(\mathcal A_i) > \frac{|\mathcal A_i|}{2k}\}$. Note that $|D(\mathcal A_i)| \le 2k^2$. Every subset from $\binom{[n]\setminus D(\mathcal A_i)}{k}$ intersects at most $k \cdot \frac{|\mathcal A_i|}{2k}$ subsets from $\mathcal A_i$. Therefore we have
$$
e(\mathcal A_i) \ge
\left| \mathcal A_i \cap \binom{[n]\setminus D(\mathcal A_i)}{k} \right| \frac{|\mathcal A_i|}{2}.
$$

First using Proposition~\ref{prop2_growing_k} and then the inequalities $|\mathcal A_i| > \frac{1}{2^{20k}h}\binom{h}{k}$ and $\binom{h}{k}^2 > 2^{40k}h^3 \log_2 n$, we have
$$
\left| \mathcal A_i \cap \binom{[n]\setminus D(\mathcal A_i)}{k} \right| \le
4\frac{|\mathcal A_i|k\log_2 h + h\log_2 n}{|\mathcal A_i|} \le
4k\log_2 h + \frac{1}{2^{20k}h}\binom{h}{k} = (1+o(1))\frac{1}{2^{20k}h}\binom{h}{k}.
$$
Note that this inequality is trivially true for those $\mathcal A_i,$ $i\in [b],$ such that $|\mathcal A_i|\le \frac{1}{2^{20k}h}\binom{h}{k}$, and thus is true for any $i\in [b].$

Put $B':=[h]-([b]\cup D(\mathcal A_1)\cup \ldots\cup D(\mathcal A_b)).$ Note that $m := |B'| \ge h - b - 2bk^2 \ge \frac14 h$. Assume that $B' = \{c_1, \ldots, c_m\}$.  All but  $2^{-19k}\binom{h}{k}$ subsets from $\binom{B'}{k}$ are  in one of the colors $\mathcal A_{c_i},$ $1 \le i \le m$ and contain the corresponding element $c_i$. Indeed, any other set is either contributing to $\gamma(\mathcal A_i)$ for one of $i\in [b+1,h]$, or to $|\mathcal A_i\cap {B'\choose k}|$ for $i\le b.$  The number of the sets of the first type is small  due to \eqref{eqminus1}, and the number of the sets of the second type is small due to the last displayed inequality and the definition of $B'$.
\end{proof}

The last part of the proof of Lemma~\ref{lem2} is based on an intricate counting argument. The following notion is crucial for this counting. Consider a quadruple $(B, B', Z, \mathcal F)$ such that the following hold
\begin{itemize}
    \item $B \subset [n]$ and $|B| = h$.
    \item $B' = \{c_1, \ldots, c_m\} \subset B$ and $|B'| = m \ge \frac14 h$.
    \item $Z = (F_1, \ldots, F_m)$, where $F_i\in \binom{B}{k}$.
    \item $\mathcal F \subset \binom{B'}{k}$ and $\left| \binom{B'}{k} \setminus \mathcal F \right| < 2^{-16k}\binom{m}{k}$.
\end{itemize}

Let us estimate the number $M_{m}$ of such quadruples for a fixed $m$, $h/4 \le m \le h$.
\begin{align*}
M_{m} \le&
\sum_{h=m}^{4m} \sum_{|\mathcal F| = \binom{m}{k} -  2^{-16k}{m\choose k}}^{\binom{m}{k}}
    \binom{n}{h} \binom{h}{m} \binom{h}{k}^m \binom{\binom{m}{k}}{|\mathcal F|}\\
\le&\
6m \cdot\binom{n}{4m} \binom{4m}{m} \binom{4m}{k}^m \binom{\binom{m}{k}}{ 2^{-16k}{m\choose k}}\\
\le&\
2^{(1+o(1))4m\log_2 n + (1+o(1))H(2^{-16k})\binom{m}{k}} \le
2^{2^{-2k} {m\choose k}},
\end{align*}
where  $H(p)=-p\log_2 p-(1-p)\log_2 (1-p)$ is the binary entropy function and the last inequality is due to the condition  $\binom{m}{k} > 2^{20k} h\log_2 n$. We also used the easy-to-check inequality $H(2^{-16k})<2^{-3k}.$

We return to the proof of Lemma~\ref{lem2}. Consider a forbidden partition $(B, \mathcal M)$. We say that a quadruple $(B, B', Z, \mathcal F)$ as above is \emph{an imprint of} $(B, \mathcal M)$ if the following holds:
\begin{itemize}
    \item each $F_i \in Z$ is colored in color $\mathcal A_i$ and does not contain element $c_i$.
    \item each $F \in \mathcal F$ is colored in one of the colors $\mathcal A_i$, $1\le i \le m$, and contains the corresponding element $c_i$.
\end{itemize}

From Proposition~\ref{prop_imprint_construction}, each forbidden partition $(B, \mathcal M)$ has an imprint (up to a reordering of colors). 
We call a quadruple $(B, B', Z, \mathcal F)$ \emph{an admissible imprint} if it is an imprint for some forbidden partition $(B, \mathcal M)$. Then, in order to show that a.a.s. there are no forbidden partitions, it is enough to show that there are no admissible imprints.

Let us bound the probability that a quadruple $(B, B', Z, \mathcal F)$ with $|B'| = m$ is an admissible imprint. For that to happen, each of the $k$-sets $F$ in $\mathcal F$ in $KG_{n,k}(1/2)$ should satisfy the following: there is an $i$ such that, first, $c_i\in F$ and, second, there is no edge between $F$ and $F_i$ in the random graph.
We say that  $F$ is {\it good} if it is disjoint with all such $F_i$. For a good $F$ the probability of the event described before is $(1-2^{-k})$. Moreover, such events are independent for different good $F$. We have at most $k{m-2\choose k-2}$ sets from $\ff$ that intersect any given $F_i$ and contain $c_i$, and thus at least $|\ff|-mk{m-2\choose k-2}$ sets in $\ff$ that are good. Using that $|\ff|-mk{m-2\choose k-2}\ge (1-2^{-16k}-\frac{k^2(k-1)}{m-1}{m\choose k})>\frac 12{m\choose k}$, the probability $\Prb_m$ of the event that the imprint is admissible may be bounded as
$$
\Prb_m \le
(1 - 2^{-k})^{|\mathcal F| - mk\binom{m-2}{k-2}} \le
e^{-\left(|\mathcal F| - mk\binom{m-2}{k-2}\right) 2^{-k}} \le
e^{-\binom{m}{k} 2^{-k-1}}.
$$

Let $h'$ be the smallest $h$ that satisfies the conditions of Lemma~\ref{lem2}.
We denote by $X$ the random variable that counts the number of admissible imprints $(B, B', Z, \mathcal{F})$ with $m \ge h'/4$ in $KG_{n,k}(1/2)$. Then
\begin{align*}\Pr(X > 0) \le
\E X =
\sum_{m=h'/4}^n \Prb_m M_{m}\le
\sum_{m=h'/4}^n e^{-\binom{m}{k} 2^{-k-1}} 2^{2^{-2k}{m\choose k}}\\ \le 
\sum_{m=h'/4}^n e^{-\binom{m}{k} 2^{-k-2}} \le
e^{\ln n - \binom{h'/4}{k} 2^{-k-2}} \to 0 \ \  \text{as }\ n\to \infty.
\end{align*}

We conclude that a.a.s. $X = 0$, which concludes the proof of Lemma~\ref{lem2}.

\subsection{Proof of Lemma~\ref{lem3}}

Recall that $k$ is constant for this proof. For simplicity, let us assume that $G$ from the formulation of the lemma is induced on the set $B:=[h_0]$. The key to the proof of Lemma~\ref{lem3} is the following proposition, which allows us to estimate the number of monochromatic edges in any coloring of vertices of the subgraph of $KG_{n,k}$ induced on $B$, provided that the number of colors used is small. Fix a sufficiently large constant $C''>0$ and put $h_1:= C''\sqrt{\log_2 n \cdot \log_2 \log_2 n}$ for $k=2$ and $h_1:= C''\sqrt[2k-2] {\log_2 n}$ otherwise.

\begin{prop}\label{prop2}
    In the notations above, let $\mathcal{A}_1, \ldots, \mathcal{A}_\chi$ be a partition (coloring) of ${B\choose k}$, where $\chi = h_0 - h_1$, and let $0 < \eps' < 1$.

    Then there exist $C_2>0$, two disjoint subsets  $B', B'' \subset B$, $b := |B'| \ge h_1$, $l := |B''|$, $B'' = \{c_1, \ldots, c_l\}$ and a reordering of the families $\mathcal{A}_1, \ldots, \mathcal{A}_\chi$ such that for the family $$\mathcal F :=
    \binom{B'}{k} \cup  \bigcup_{i=1}^l \Big(\mathcal{A}_i\cap \mathcal S_{c_i}\cap \binom{B'\cup B''}{k}\Big)$$ 
the total number 
    of monochromatic edges in the subgraph of $KG_{n,k}$ induced on $\mathcal F$ is at least $M(2):=C_2 \frac{(b+l) b^{2}}{\log_2\log_2 n}$ for $k=2$ and at least $M(k):=C_2 (b+l)^{k-1 - \eps'} b^{k+\eps'}$ otherwise.
\end{prop}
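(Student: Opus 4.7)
The plan is to mimic the greedy peeling argument from the proof of Lemma~\ref{lem2}, now keeping careful track of the monochromatic Kneser edges of the partition. Initialize $B_1':=B$; at step $i$, select a color whose restriction to $\binom{B_i'}{k}$ has maximum size, hence at least $\binom{|B_i'|}{k}/\chi$, rename it $\mathcal A_i$, and split into two cases. If this restriction has small diversity, so that it is contained in a star $\mathcal S_{c_i}$ up to a few exceptions, add $c_i$ to $B''$, set $B_{i+1}':=B_i'\setminus\{c_i\}$, and iterate. If instead the restriction has large diversity, then by Case~II of Proposition~\ref{prop1_growing_k} this single color already contains $\gtrsim |\mathcal A_i|^2/\binom{2k}{k}$ monochromatic Kneser edges; halt the peeling, take $B'$ to be a suitable subset of $B_i'$ of size $\ge h_1$ containing most of this color's support, and read off the bound $M(k)$ from this color alone. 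Otherwise iterate until $|B_i'|=h_1$ or the largest color restriction drops below the threshold $\binom{|B_i'|}{k}/(2|B_i'|)$ at which Proposition~\ref{prop2_growing_k} becomes vacuous.

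In the main case the peeling yields $l$ stars and a residual $b:=|B'|\ge h_1$. The only monochromatic edges in $\mathcal F = \binom{B'}{k}\cup \bigcup_{i=1}^l(\mathcal A_i \cap \mathcal S_{c_i}\cap \binom{B'\cup B''}{k})$ are tail--star edges: pairs $(F_1, F_2)$ with $F_1 \in \mathcal A_i\cap\binom{B'}{k}$ (which does not contain $c_i$, since $c_i\notin B'$) and $F_2\in \mathcal A_i\cap \mathcal S_{c_i}\cap\binom{B'\cup B''}{k}$ disjoint from $F_1$. Since only $k\binom{|B'\cup B''|-2}{k-2}=o(|\mathcal A_i\cap\mathcal S_{c_i}\cap\binom{B'\cup B''}{k}|)$ star-sets meet $F_1$ (using $h_0 \gg k^3$), each stranded $F_1$ contributes $(1-o(1))|\mathcal A_i\cap\mathcal S_{c_i}\cap\binom{B'\cup B''}{k}|$ monochromatic edges. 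The peeling invariant combined with Proposition~\ref{prop2_growing_k} gives $|\mathcal A_i\cap\mathcal S_{c_i}\cap\binom{B_i'}{k}|\ge \binom{|B_i'|}{k}/\chi - \gamma_i = \Omega((b+l)^{k-1}/k!)$, while the partition property forces $\sum_i |\mathcal A_i\cap\binom{B'}{k}|$ to be large: every $k$-subset of $B'$ not colored by a non-peeled residual color lands in some tail. An averaging over the $l$ peeling stages, weighted against the degradation of $|B_i'|$ toward later steps, should then yield the claimed $\Omega(b^k(b+l)^{k-1})$ monochromatic edges.

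The main obstacle is the quantitative tightness. The $\eps'$ shift in $M(k)$ signals that the naive bound $\sum_i x_i y_i \ge (\sum_i x_i)(\min_i y_i)$ is insufficient, since $\min_i y_i$ degrades to $\Omega(b^k/h_0)$ at the final peeling step while $\max_i y_i$ is $\Omega(h_0^{k-1})$ at the first; a Cauchy--Schwarz or H\"older-type averaging exploiting the per-step diversity bound $\gamma_i$ on the tail $x_i$ appears necessary, trading $\eps'$ in the exponent of $(b+l)$ for $\eps'$ in the exponent of $b$. The separately weaker bound for $k=2$ reflects that Frankl's diversity bound $\gamma\le\binom{b-3}{k-2}=1$ collapses to a constant, leaving only the $O(k\log b)$ term of Proposition~\ref{prop2_growing_k} effective and thus costing an extra $1/\log_2\log_2 n$ factor at the last peeling steps; this case likely requires replacing the Frankl input by a dedicated Hilton--Milner analysis of intersecting $2$-uniform families at each step.
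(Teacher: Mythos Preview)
Your plan has a real structural gap: you have misidentified the mechanism behind the $\eps'$ in $M(k)$ and the $1/\log_2\log_2 n$ in $M(2)$, and the tools you invoke are not the ones that actually drive the proof.

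First, Proposition~\ref{prop2} is a purely deterministic statement about monochromatic Kneser edges in an \emph{arbitrary} partition of $\binom{B}{k}$; there is no hypothesis that the $\mathcal A_i$ are independent in $KG_{n,k}(1/2)$. Your repeated appeals to Proposition~\ref{prop2_growing_k} (diversity of independent sets in the random graph) are therefore out of place. The correct peeling criterion is not ``largest color has small diversity'' but simply ``some remaining color has a star of size at least $(|B_i'|-1)^{k-1}/C_1$ on the current ground set''; you peel that center and iterate until no such color exists, leaving a residual $B'=[b]$ with $b\ge h_1$ on which \emph{no} remaining color has a big star. The residual case (your ``large diversity'' branch) is then handled collectively: if at least half of $\binom{[b]}{k}$ sits in colors without big stars, one shows directly that each set in a big such color is disjoint from $\Omega(\binom{b}{k}/b)$ same-color sets, giving $\Omega(b^{2k-1})\ge M(k)$ edges inside $\binom{B'}{k}$ with $B''=\emptyset$.

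Second, and more importantly, the $\eps'$ does not come from H\"older or Cauchy--Schwarz, and the $\log_2\log_2 n$ has nothing to do with Frankl's diversity bound or Hilton--Milner. The actual device is a \emph{dyadic pigeonhole on the peeled centers}: group the centers into blocks $\mathcal C_s$ corresponding to indices in $(b2^s, b2^{s+1}]$, so that every star in $\mathcal C_s$ has size $\ge (b2^s)^{k-1}/C_1$. For $k\ge 3$, since $\sum_{s\ge 0} 2^{-\eps' s}<\infty$, some single scale $s$ carries tail mass $|\mathcal C_s\cap\binom{[b]}{k}|\ge (1-2^{-\eps'})2^{-\eps' s}L_1$; multiplying by the uniform star size $(b2^s)^{k-1}$ at that scale yields $\Omega\big((b2^s)^{k-1-\eps'}b^{k+\eps'}\big)$, and one takes $B''=[b+1,b2^{s+1}]$. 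For $k=2$ the same dyadic decomposition has only $\log_2(h_0/b)=\Theta(\log_2\log_2 n)$ scales (since $h_0=C\log_2 n$ and $b\ge h_1=C''\sqrt{\log_2 n\cdot\log_2\log_2 n}$), so plain pigeonhole over scales loses exactly a $\log_2\log_2 n$ factor. Your proposed per-step diversity averaging never isolates a single scale and therefore cannot produce the stated form of $M(k)$; replace it with the dyadic argument and both cases fall out immediately.
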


\begin{proof}
Put $C_1 := 6k\cdot k!$. We need to classify $\mathcal A_i$ into two different types:  the ones that have ``big stars'' and the ones that do not. To do so, on the $i$-th step, $i=1,\ldots$ we find a family with a star of size at least $\frac{(h_0-i+1)^{k-1}}{C_1}$ and remove the corresponding center of the star from the ground set and the family from the list of colors. We repeat until there is no family with such a star. Reordering the families $\mathcal A_i$ and the elements of $B$, we conclude that for some  $b \ge h_1$ the following holds:

1. For $i=1,\ldots, h_0-b$ we have $\left|\mathcal{A}_i \cap \mathcal{S}_{b+i} \cap \binom{[b+i]}{k}\right| \ge \frac{(b+i - 1)^{k-1}}{C_1}$.

2. For $h_0-b+1\le i\le \chi$ and any $x \le b$, we have $\left|\mathcal{A}_i \cap \mathcal{S}_{x} \cap \binom{[b]}{k}\right| < \frac{(b - 1)^{k-1}}{C_1}$.\\

That is, $\mathcal{A}_{1}, \ldots, \mathcal{A}_{h_0-b}$ are the families (colors) with $C_1$-fraction stars with centers in $b+1, \ldots, h_0$, respectively, and 
the families $\mathcal{A}_{h_0-b+1}, \ldots, \mathcal{A}_{\chi}$ do not have positive-fraction stars when restricted to  $\binom{[b]}{k}$.

Put
\begin{align*}
L_1 :=& \sum_{i=1}^{h_0-b} \left|\mathcal{A}_i \cap \binom{[b]}{k}\right|,\\
L_2 :=& \sum_{i=h_0-b+1}^{\chi} \left|\mathcal{A}_i \cap \binom{[b]}{k}\right|.
\end{align*}
Since $L_1 + L_2 = \binom{b}{k}$, we either have $L_1 \ge \frac12 \binom{b}{k}$, or $L_2 \ge \frac12 \binom{b}{k}$.

\medskip

\textbf{Case I.\ \  $L_1\ge \frac12 \binom{b}{k}$.}
\vskip+0.1cm
For integer $s\ge 0$ put $$\mathcal C_s:=\bigcup_{i=b 2^s+ 1}^{\min(b 2^{s+1}, h_0)} \mathcal A_{i-b}.$$
Note that starting from some $s$, the families $\mathcal C_s$ are empty. Clearly, $\sum_{s=0}^{\infty} \left|\mathcal C_s \cap {[b]\choose k}\right| = L_1.$
Since $\sum_{s=0}^{\infty} 2^{-\eps' s} = \frac {1}{1-2^{-\eps'}}$, one can find $s \ge 0$, such that
$$
\left|\mathcal C_s \cap {[b]\choose k}\right| \ge (1 - 2^{-\eps'}) 2^{-\eps' s}L_1.
$$

Each set from $\mathcal{A}_i \cap \binom{[b]}{k}$, where $1\le i \le h_0-b,$ is disjoint with at least $\frac{(b+i-1)^{k-1}}{C_1} - k\binom{b+i - 2}{k-2} = (1 + o(1)) \frac{(b+i-1)^{k-1}}{C_1}$ sets in $\mathcal{A}_i \cap \mathcal S_{b+i} \cap \binom{[b+i]}{k}$. Therefore, each set from $\mathcal C_s\cap {[b]\choose k}$ forms at least $(1+o(1))\frac {(b2^s)^{k-1}}{C_1}$  {\it monochromatic edges} with sets from $\mathcal C_s\cap {[b2^{s+1}]\choose k}$.
So the family $\mathcal C_s\cap {[b2^{s+1}]\choose k}$  induces at least 
$$
(1 + o(1))\frac{(b2^s)^{k-1}}{C_1} \cdot (1 - 2^{-\eps'}) 2^{-\eps' s}L_1 \ge
C_1'(b2^{s})^{k-1-\eps'}b^{\eps'}{b\choose k}=:(*)
$$ 
monochromatic edges for some constant $C_1'>0$. Let us put $B':=[b], B'':=[b+1, b2^{s+1}]$ and rewrite the quantity above.

$$(*)\ge C_1''\frac{(b2^{s+1})^{k-1-\eps'} b^{k+\eps'}}{2^{k-1-\eps'} k!} \ge
C_2(|B'| + |B''|)^{k-1-\eps'} {|B'|}^{k+\eps'},
$$
where $C''_1, C_2>0$ are again some constants depending on $k$. Thus, the proposition holds for the $B'$ and $B''$ as above and $k\ge 3$. 

If $k=2$ then we first  find $0 \le s \le \log_2 (h_0/ b) = \Theta(\log_2\log_2 n)$ such that
$$
\left|\mathcal C_s \cap {[b]\choose k}\right| \ge 
\frac{L_1}{\Theta(\log_2\log_2 n)}
$$
using simple pigeon-holing. And then the rest of the calculations stays the same.
\medskip

\textbf{Case II.\ \ $L_2 \ge \frac12 \binom{b}{k}$.}\vskip+0.1cm

Let us put $q:=\chi-(h_0-b)$. That is, $q$ is the total number of families (colors) $\mathcal A_i$ that contribute to $L_2$. Let us call a family $\mathcal{A}_i$, $\chi-q+1\le i \le \chi$, {\it big}, if
$|\mathcal{A}_i \cap \binom{[b]}{k}| \ge \frac{L_2}{2q}$. The total contribution of small $\mathcal A_i$ to $L_2$ is at most $q \frac{L_2}{2q}$. Therefore, at least a half of the sets contributing to $L_2$ lie in big families. In what follows, we restrict our attention to big families.

Since for any $\chi-q + 1 \le i \le \chi$ and $1 \le x \le b$ we have $|\mathcal{A}_i \cap \mathcal{S}_{x} \cap \binom{[b]}{k}| < \frac{(b - 1)^{k-1}}{C_1}$ (no big stars condition), any set from a big family $\mathcal{A}_i \cap \binom{[b]}{k}$ is disjoint with at least $\frac {L_2}{2q}- k \frac{(b - 1)^{k-1}}{C_1}$ sets in that family. Using the condition on $L_2$ and the fact that $q\le b$, we get that
$$
\frac {L_2}{2q}- k \frac{(b - 1)^{k-1}}{C_1}\ge \frac{\binom{b}{k}}{4b} - k \frac{(b - 1)^{k-1}}{C_1} =
\frac{\binom{b}{k}}{4b} \left(1 - \frac{4kb(b-1)^{k-1}}{6k\cdot k! \binom{b}{k}}\right) \ge
 (1+o(1))\frac{\binom{b}{k}}{12b}.
$$

Thus, the restrictions of the sets  $\mathcal{A}_{\chi-q+1}, \ldots, \mathcal{A}_\chi$ on $\binom{[b]}{k}$ induce at least
$$(1+o(1))\frac{\binom{b}{k}}{12b} \cdot \frac{L_2}{2}\ge C_2b^{2k-1}\ge M(k)$$ edges for some positive constant $C_2$. The proposition holds for $B' := [b]$ and $B'' = \emptyset$.
\end{proof}

\medskip

Let us prove Lemma~\ref{lem3} using Proposition~\ref{prop2}. Assume that $\chi(G) \le h_0 - h_1$ and let  $\mathcal{A}_1, \ldots, \mathcal{A}_{h_0 - h_1}$ be a proper coloring of the vertices of  $G$ in $h_0 - h_1$ colors. From Proposition~\ref{prop2}, we conclude that there exist disjoint subsets $B'$, $B''$, and a reordering of the families $\mathcal{A}_1, \ldots, \mathcal{A}_{h_0 - h_1}$, such that the family $\binom{B'}{k} \cup (\mathcal{A}_1\cap \mathcal S_{c_1}\cap \binom{B'\cup B''}{k})\cup\ldots \cup (\mathcal{A}_l \cap \mathcal S_{c_l}\cap \binom{B'\cup B''}{k})$ induces at least $M(k)$ edges in $KG_{n,k}$, which must be absent in $G$. The probability $\Prb$ of such an event, over all possible choices of $B'$, $B''$, $\mathcal A_i \cap \mathcal S_{c_i}\cap \binom{B'\cup B''}{k}$ and colorings of vertices of $k$-sets on $B'$, satisfies
$$
\Prb \le
\sum_{b = h_1}^{h_0}\sum_{l=0}^{h_0-b} \binom{n}{b}\binom{n-b}{l} l! (h_0 - h_1)^{\binom{b}{k}} 2^{l \binom{b+l}{k-1}} 2^{- M(k)} \le
\sum_{b = h_1}^{h_0}\sum_{l=0}^{h_0-b} 2^{(b+l) \log_2 n + b^k \log_2 h_0 + (b+l)^{k} -M(k)} .
$$

Consider the case $k \ge 3$. Recall that $M(k)=C_2 (b+l)^{k-1-\eps'} b^{k+\eps'}$.
Thus, for sufficiently large $C''$ (from the definition of $h_1$) we have
\begin{align*}
(b + l) \log_2 n =&\
(1 + o(1)) (b + l) C''^{-(2k-2)} h_1^{2k-2} \le
(1 + o(1)) C''^{-(2k-2)} (b+l)^{k-1-\eps'} b^{k+\eps'} \\
<&\
\frac12 C_2 (b+l)^{k-1-\eps'} b^{k+\eps'} = M(k)/2,
\\
(b + l)^{k}\log_2 h_0 \le&\
(b + l)^{k-1-\eps'} h_0^{1+2\eps'} \le
(b + l)^{k-1-\eps'} h_1^{2(1+2\eps')} \\
=&\
o\left((b + l)^{k-1-\eps'} b^{k+\eps'} \right)= o(M(k)).
\end{align*}

Consider the case $k=2$. Recall that $M(2)=C_2 \frac{(b+l) b^{2}}{\log_2\log_2 n}$ and that $b + l \le h_0 = C \log_2 n$.
For sufficiently large $C''$ the quantity $\max\{(b+l)\log_2 n, (b+l)^2\}$  can be bounded by
\begin{align*}
\max \{1,C\} (b + l) \log_2 n =&\
(1 + o(1)) \max\{1,C\} C''^{-2} (b + l) \frac{h_1^{2}}{\log_2 \log_2 n} <
\frac14 C_2 \frac{(b+l) b^{2}}{\log_2\log_2 n} = \frac {M(2)}4.
\end{align*}
Moreover, $b^2\log_2h_0<b^{2.1}=o(M(2)).$
Thus both for $k\ge 3$ and $k=2$ we have 
$$
\Prb \le
\sum_{b = h_1}^{h_0}\sum_{l=0}^{h_0-b} 2^{(b + l) \log_2 n + b^k \log_2 h_0 + (b+l)^{k} - M(k)} \le
\sum_{b = h_1}^{h_0}\sum_{l=0}^{h_0-b} 2^{ - \frac13 M(k)} \to 0.
$$

\subsection{Lemma~\ref{lem3_growk}}

\subsubsection{Sparse subgraphs of $KG_{n,k}$ with the same chromatic number}
\label{secxg}


In \cite{KS} Kaiser and Stehl\'{\i}k constructed a subgraph $XG_{n, k}$ of $SG_{n, k}$ such that $\chi(XG_{n, k}) = n - 2k + 2$ and any proper subgraph of $XG_{n, k}$ has smaller chromatic number. In other words, while $SG_{n, k}$ is a vertex-critical subgraph of $KG_{n, k}$, $XG_{n, k}$ is an edge-critical subgraph of $KG_{n, k}$. We note that we do not use the criticality of $XG_{n,k}$. What is of importance for us is that $XG_{n,k}$ always has much fewer edges than $KG_{n,k}.$ In what follows, we use a certain supergraph of $XG_{n,k}$ to prove the following lemma.

\begin{lemma}\label{lemkai}
In any coloring of $KG_{n,k}$ with $n-2k+1$ colors there are at least $2^{-k}{2k\choose k}$ monochromatic edges. 
\end{lemma}
Note that $2^{-k}{2k\choose k}>2^{k-1}/k.$
\begin{proof}
We start by defining a graph $XG'(n,k)$ that contains $XG(n,k)$ as a subgraph. In this proof we order elements of $[n]$ on a cycle in the clockwise direction and perform addition modulo $n$.
For $a, b\in [n]$, $[a, b]$ denotes the set $\{a, a+1, \ldots, b\}$, that is, a segment on the circle from $a$ to $b$ in the clockwise direction.

For a given edge $(F_1, F_2)$ of $KG_{n, k}$ we call an interval $[d, c]$, $c \le k < d$, \emph{admissible}, if $|[d, c]\cap F_1| = |[d, c]\cap F_2| = c$. Note that for two admissible intervals $[d_1, c]$ and $[d_2, c]$ we have $[d_1, c]\cap F_1 = [d_2, c]\cap F_1$ and $[d_1, c]\cap F_2 = [d_2, c]\cap F_2$, since either $[d_1, c]\subset [d_2, c]$ or $[d_2, c] \subset [d_1, c]$.

For an admissible interval $[d, c]$ we define  \emph{switching} at $[d, c]$. It maps a given edge $(F_1, F_2)$ of $KG_{n,k}$ to an edge $(F_1', F_2')$ via the following rule:
$$
F_1' := F_1~\triangle~([d, c] \cap (F_1\sqcup F_2)),
$$
$$
F_2' := F_2~\triangle~([d, c] \cap (F_1\sqcup F_2))
$$
(that is, it swaps elements of $F_1$ and $F_2$ inside $[d, c]$).
Note that for two admissible intervals $[d_1, c]$ and $[d_2, c]$ the switching operation will be the same. Also note that switching operations are commutative and that if apply switching at $[d, c]$ to an edge $(F_1, F_2)$ two times, we get the same edge again. All these properties imply that for a given edge $(F_1, F_2)$ we can obtain at most $2^{k-1}$ edges by applying a sequence of switching operations. (We use that the edge $(F_1,F_2)$ is the same as $(F_2,F_1)$.)

We say that an edge $(F_1, F_2)$ of $KG_{n, k}$ is \emph{interlacing} if the elements of $F_1$ and $F_2$ alternate along $[n]$. 
Finally, let $XG'_{n,k}$ be a graph with the vertex set ${[n]\choose k}$ and with the edge set that consists of  all edges $(F_1', F_2')$ which can be obtained from an interlacing edge via a sequence of switching operations.

In \cite{KS}, the authors showed that there is a subgraph $XG_{n,k}$ of $XG'_{n,k}$ with chromatic number $n-2k+2,$ which implies $\chi(XG'_{n,k})=n-2k+2.$

For a $2k$-element set $A$ there is one interlacing edge $(F_1, F_2)$ with $F_1\sqcup F_2 = A$. Thus, there are at most $2^{k-1}$ edges $(F_1, F_2)$ in $XG'_{n,k}$ with $F_1\sqcup F_2 = A$. On the other hand, $KG_{n,k}$ contains $\binom{2k-1}{k}$ edges with $F_1\sqcup F_2 = A$, which implies
$$
\frac{|E(KG_{n,k})|}{|E(XG'_{n,k})|} \ge 2^{-k+1}\binom{2k-1}{k} = 2^{-k}{2k\choose k}.
$$
Repeating the argument from Subsection~\ref{sec21}, if $KG_{n,k}$ has a subgraph $H$ with chromatic number $n-2k+2,$ then any coloring of $KG_{n, k}$ in $n - 2k + 1$ color has at least $\frac{|E(KG_{n,k})|}{|E(H)|}$ monochromatic edges. This and the inequality above imply the statement of the lemma.
\end{proof}

\subsubsection{Proof of Lemma~\ref{lem3_growk}}

For simplicity, suppose that $G$ is the induced subgraph of $KG_{n,k}(1/2)$ on $[h_0]$.
Here we apply the blow-up approach of the second author from \cite{Kup2} and amplify it using Lemma~\ref{lemkai}.

Fix $l = 6$ and consider a coloring of $G$ with $h_0 - 2k - 2l + 1$ colors. First, we construct a coloring of $KG_{h_0,k+l}$ with $h_0 - 2k - 2l + 1$ colors. We color each $(k+l)$-set in (one of) the most popular colors among its subsets. From Lemma~\ref{lemkai}, this coloring contains at least $2^{k-1} /k$ monochromatic edges and each of these edges corresponds to a monochromatic bipartite subgraph in $KG_{h_0,k}$ with parts of size at least $t := \binom{k + l}{k} / h_0$. Note that $t = Ck^{0.9}$ for some constant $C$.

Let us show that we can choose a large proportion of these $2^k / 2k$ monochromatic edges such that corresponding bipartite subgraphs are edge-disjoint. First, we need the following lemma.

\begin{lemma}
Let $n, r, s$ be positive integers such that $n > 2r^2 + 2r$ and $r>s$. Then there is a family $\mathcal H(n, r, s) \subset\binom{[n]}{r}$ of $r$-sets such that each $s$-set is contained in at most one set of $\mathcal H(n, r, s)$ and the number $dp(\mathcal H(n, r, s))$ of disjoint pairs of $r$-sets in $\mathcal H(n, r, s)$ is at least $\frac{1}{400} \binom{n}{r}{n-r\choose r} / \left(\binom{r}{s}\binom{n-r}{r-s}\right)^2$.
\end{lemma}

\begin{proof}
We apply the standard deletion method. Fix $p > 0$ and tаке a random subfamily $\mathcal H_1$ of $\binom{n}{r}$, including each $r$-set independently with probability $p$. The expected number of disjoint pairs in $\mathcal H_1$ is $\E [dp(\mathcal H_1)] = \frac12 p^2 \binom{n}{r}\binom{n-r}{r}$.
Then, for each pair of $r$-sets in $\mathcal H_1$ which intersect in a set of at least $s$ elements we remove one of those sets from $\mathcal H_1$, obtaining a family $\mathcal H$. By definition, each $s$-set is contained in at most one $r$-set from $\mathcal H$. It remains to bound from below the expected number of disjoint pairs in $\mathcal H$. We have that $dp(\mathcal H_1)-dp(\mathcal H)$ is at most  the number of triples $H_1, H_2, H_3\in\mathcal H_1$ such that $H_1\cap H_2 = \varnothing$, $H_1\ne H_3$, and $|H_1\cap H_3| \ge s$. Calculating the expectation, we get
$$
\E[dp(\mathcal H)] \ge \frac12 p^2 \binom{n}{r}\binom{n-r}{r} - p^3\binom{n}{r}\binom{n-r}{r} \sum_{i=s}^{r-1} \cdot \binom{r}{i}\binom{n - r}{r - i}.
$$

Taking $p^{-1} := 3\sum_{i=s}^{r-1} \binom{r}{i}\binom{n-r}{r-i}$, we obtain that there is a family $\mathcal H$ such that 
$$
dp(\mathcal H) \ge
\frac{1}{54} \binom{n}{r}\binom{n-r}{r} / \left(\sum_{i=s}^{r-1} \binom{r}{i}\binom{n-r}{r-i}\right)^2.
$$

For $n > 2r^2 + 2r$ and $i = s, \ldots, r-1$ we have 
$$
\binom{r}{i}\binom{n-r}{r-i} / \binom{r}{i+1}\binom{n-r}{r-i-1} = 
\frac{i+1}{r-i}\cdot \frac{n-2r+i+1}{r-i} > 2
$$
and $\sum_{i=s}^{r-1} \binom{r}{i}\binom{n-r}{r-i} \le 2 \binom{r}{s}\binom{n-r}{r-s}$. 
Thus,
\[
dp(\mathcal H) \ge 
\frac{1}{400}\binom{n}{r}{n-r\choose r} / \left(\binom{r}{s}\binom{n-r}{r-s}\right)^2.
\qedhere \popQED
\]
\end{proof}


Fix a family $\mathcal H := \mathcal H(h_0, k + l, k)$. Let $\pi$ be a random permutation of $[h_0]$, $\mathcal H_\pi$ be the corresponding image of $\mathcal H$, and $KG_{h_0,k+l}|_{\mathcal H_\pi}$ be the subgraph of $KG_{h_0,k+l}$ induced on the $(k+l)$-sets from $\mathcal H_\pi$. 
Clearly, a fixed pair of disjoint $(k+l)$-sets belongs to $\mathcal H_\pi$ with probability 
$$
dp(\mathcal H) \left(\frac12 \binom{h_0}{k+l}\binom{h_0 - k - l}{k+l} \right)^{-1} \ge \frac{1}{200}  \left( \binom{k+l}{k}\binom{h_0-k-l}{l} \right)^{-2}.
$$
Thus, there exists a permutation $\pi$ such that $KG_{h_0,k+l}|_{\mathcal H_\pi}$ contains at least $$\frac{2^k}{400 k} \left( \binom{k+l}{k}\binom{h_0-k-l}{l} \right)^{-2} \ge 2^k / (400k(k h_0)^{2l})$$ monochromatic edges.
As we noted before, each monochromatic edge of $KG_{h_0,k+l}|_{\mathcal H_\pi}$ corresponds to a monochromatic bipartite subgraph of $KG_{h_0,k}$ with parts of size at least $t$. Moreover,  due to the construction of $\mathcal H$, these bipartite subgraphs are edge-disjoint.

If the coloring is proper for the subgraph $G$, edges of these bipartite subgraphs must be missing in $KG_{n,k}(1/2)$. 
The expected number of such empty bipartite subgraphs in $KG_{h_0, k}$ is bounded from above by
$$
\binom{h_0}{k + l}^2 \binom{\binom{k+l}{k}}{t}^2 2^{-t^2} \le
2^{2(k + l) \log_2 h_0 + 2t l \log_2 (k + l) - t^2} =
2^{-(1 + o(1)) t^2}.
$$
(Here, we used that $t = Ck^{0.9}$ for some constant $C$.) Thus, the probability that there is a set $X$ of $h_0$ elements from $[n]$ such that there are $2^k / (400k(k h_0)^{2l})$ empty bipartite subgraphs as above in the subgraph $KG_{X,k}(1/2)$ of $KG_{n,k}(1/2)$ induced on $X$ is bounded by
$$
\binom{n}{h_0}  2^{-(1 + o(1)) t^2 \cdot  2^k / (400k(k h_0)^{2l})} \le
2^{k^{5.1} \log_2 n - 2^{(1+o(1))k}} = 
2^{k^{5.1} (\log_2 n - 2^{(1+o(1))k})} . 
$$
Since $k = (1 + \eps)\log_2 \log_2 n$, we have $2^{(1+o(1))k} = \log_2^{1+\eps+o(1)} n$, and the last displayed expression tends to 0 as $n\to\infty$. This concludes the proof of the lemma.

\section{Proof of Theorem~\ref{thm2}}\label{sec4}
The proof of the theorem follows a very natural approach: select a certain collection of edges in $KG_{n,k}$ and a subset of elements of $[n]$ that contains it, such that at least one copy of this collection is likely to disappear in the random graph. This, in turn, permits to color the corresponding subset of $[n]$ into relatively few colors. The other colors stay star-like.
The key idea in the proof of the bound, which is sharp at least in the case $r=2$, is the form of the collection of edges. \\

Consider an ordered family $\mathcal A=(A_1,\ldots, A_l)$ of $l$ pairwise disjoint $r$-element subsets of $[n]$.
For each $i\in[l]$, denote by $V_i(\mathcal A)$ the set of all $k$-element subsets of $A_1\sqcup\ldots\sqcup A_i$ containing at least one element from $A_i$ and denote by $E_i(\mathcal A)$ the set of all $r$-tuples of pairwise disjoint subsets in $V_i(\mathcal A)$. Note that any such $r$-tuple covers $A_i$.  Put $E(\mathcal A):=\bigcup_{i=1}^l E_i(\mathcal A)$.

Consider a spanning subgraph $G$ of $KG^r_{n,k}$.
We say that the family $\mathcal A$ is \emph{empty in $G$}, if, for each $i\in[l],$ the set $V_i(\mathcal A)$ is independent in  $G$ (in other words, $G$ does not contain edges from $E_i(\mathcal A)$).

\begin{lemma}\label{lemma21}
    If $G$ is a spanning subgraph of hypergraph $KG^r_{n,k}$, and there is a family $\mathcal A = \{A_1, \ldots, A_l\}$ as above that is empty in $G$, then $\chi(G) \le \left\lceil\frac{n-l}{r-1}\right\rceil$.
\end{lemma}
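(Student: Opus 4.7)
The plan is to construct an explicit proper coloring of $G$ using at most $\lceil (n-l)/(r-1)\rceil$ colors. I will split the $k$-subsets of $[n]$ into two classes according to whether they lie entirely inside $A := A_1\sqcup\cdots\sqcup A_l$ or meet the complement $B := [n]\setminus A$, and color the two classes with disjoint palettes, exploiting the emptiness hypothesis on the first part and a canonical bucketing argument on the second.

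For a $k$-set $S\subseteq A$, I will assign color $i := \max\{j\in[l]:S\cap A_j\ne\emptyset\}$, using at most $l$ colors. Properness in $G$ follows directly from the hypothesis: if $S_1,\ldots,S_r$ were pairwise disjoint $k$-sets that all received color $i$, then each $S_j$ lies in $A_1\cup\cdots\cup A_i$ and meets $A_i$, so $S_j\in V_i(\mathcal A)$, and $(S_1,\ldots,S_r)\in E_i(\mathcal A)$; this edge would have to be present in $G$, contradicting the independence of $V_i(\mathcal A)$ in $G$.

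For a $k$-set $S$ with $S\cap B\ne\emptyset$, I will fix any linear order on $B$ and partition $B$ into $\lceil |B|/(r-1)\rceil$ consecutive blocks of size at most $r-1$. Assign $S$ the color $l+j$, where $j$ is the index of the block containing $\min(S\cap B)$. If $r$ pairwise disjoint $k$-sets shared such a color, their $B$-minima would be pairwise distinct elements all lying in a single block of size at most $r-1$, which is impossible. This phase uses $\lceil |B|/(r-1)\rceil=\lceil (n-rl)/(r-1)\rceil$ additional colors, and summing over the two phases gives
$$l+\bigl\lceil (n-rl)/(r-1)\bigr\rceil \;=\; \bigl\lceil (n-l)/(r-1)\bigr\rceil,$$
matching the claimed bound.

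Structurally, the proof is direct once the correct split is identified, and I do not anticipate a serious obstacle. The delicate point worth highlighting is that the bucketing on $B$ requires no ``residual'' color (unlike the canonical coloring of $KG^r_{n,k}$), because $k$-sets spanning both $A$ and $B$ are already controlled by their $B$-minimum; this is precisely what makes the arithmetic combine cleanly with the $l$ colors supplied by Phase~1 and yields exactly the stated bound rather than a looser one.
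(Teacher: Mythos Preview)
Your proof is correct and follows essentially the same approach as the paper: both split the vertices according to whether the set is contained in $A_1\sqcup\cdots\sqcup A_l$, color the ``inside'' sets by the highest-indexed $A_i$ they meet (using the emptiness hypothesis), and bucket the remaining sets by an $(r-1)$-partition of the leftover ground elements. The only cosmetic difference is that the paper arranges the $A_i$ as an initial segment of $[n]$ and uses $\max(F)$ throughout, whereas you use $\min(S\cap B)$ in the second phase; the color count $l+\lceil (n-rl)/(r-1)\rceil=\lceil (n-l)/(r-1)\rceil$ is identical.
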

\begin{proof} We may  w.l.o.g. assume that $A_i = [r(i - 1) + 1, ri]$ for each $i\in[l]$. For a set $F$, denote $\max(F):=\max \{i: i\in F\}$.
Consider the following coloring $\kappa$ of the vertices of the graph $G$ in $\left\lceil\frac{n - l}{r-1}\right\rceil$ colors:
$$
\kappa(F) := \begin{cases}
    i, \ \ \ \ \ \ \ \ \ \ \ \ \ \ \ \ \ \ \ \mbox{ if } \max(F)\in A_i,\\
    l + \left\lceil \frac{\max(F) - rl}{r - 1} \right\rceil, \mbox{ if } \max(F) > rl.
\end{cases}
$$
Let us show that the coloring is indeed proper for $G$.
If pairwise disjoint sets $F_1, \ldots, F_r$ are colored in color $i \le l$, then they form an edge from $E_i(\mathcal A)$, and this edge does not belong to $G$ due to the fact that $\mathcal A$ is empty in $G$. Moreover, for any color $l+j$, $j>0$ the sets of that color all intersect the  segment $[rl+(r-1)j+1,rl+(r-1)(j+1)]$ of length $r-1$, and thus there are  no $r$ pairwise disjoint sets of that color. We conclude that the coloring $\kappa$ is proper for $G$ and that the total number of colors is
$\left\lceil\frac{n - l}{r-1}\right\rceil$.
\end{proof}

Combined with the previous lemma, the following lemma implies the statement of Theorem~\ref{thm2}.
\begin{lemma}\label{lemma22}
Let $r,k\ge 2$. There exists a constant $C=C(k,r)>0$, such that a.a.s. there is a family $\mathcal A=\{A_1, \ldots, A_{l}\}$ as above that is empty in $KG^r_{n,k}(1/2)$, where $l := C\sqrt[r(k-1)]{\log_2 n}$.
\end{lemma}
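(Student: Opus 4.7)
The plan is to introduce the random variable $X$ counting the number of ordered families $\mathcal{A}=(A_1,\ldots,A_l)$ of pairwise disjoint $r$-subsets of $[n]$ that are empty in $KG^r_{n,k}(1/2)$, and to show $X\ge 1$ a.a.s.\ via a first/second moment (Janson-type) argument. The crucial algebraic observation is that $|E_i(\mathcal{A})|$ depends only on $i$ (and on $r,k$), not on the choice of blocks. Indeed, any $r$-tuple in $E_i(\mathcal A)$ is forced by pigeonhole (since $|A_i|=r$) to use exactly one element of $A_i$ in each of its $r$ sets, so it corresponds to a function $f\colon A_i\to\binom{A_1\sqcup\cdots\sqcup A_{i-1}}{k-1}$ with pairwise disjoint images. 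Counting such functions gives
$$|E_i(\mathcal A)|=\binom{r(i-1)}{r(k-1)}\cdot\frac{(r(k-1))!}{((k-1)!)^r}\qquad(i\ge k).$$
Furthermore, the sets $E_i(\mathcal A)$ are pairwise disjoint across $i$ (the ``level'' $i$ is determined by the largest block touched by an edge), and summing yields $E^*:=|E(\mathcal A)|\le c_{k,r}\, l^{r(k-1)+1}$ for some constant $c_{k,r}>0$.

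For the first moment, the number of ordered families is $N = n(n-1)\cdots(n-rl+1)/(r!)^l$, so $\log_2 N=(1+o(1))\,rl\log_2 n$. Since the events ``$E_i(\mathcal A)$ is edge-free'' for different $i$ are independent (disjoint edge sets), we have $\Pr(\mathcal A\text{ is empty})=2^{-E^*}$. Substituting $l=C\sqrt[r(k-1)]{\log_2 n}$ yields
$$\log_2\E X \ge \bigl(r - c_{k,r}\,C^{r(k-1)} - o(1)\bigr)\, l\log_2 n,$$
which tends to $+\infty$ once $C=C(k,r)$ is chosen small enough.

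To upgrade $\E X\to\infty$ to $X\ge 1$ a.a.s., I would apply Janson's inequality to the monotone-decreasing events $B_\mathcal A=\{\mathcal A\text{ is empty}\}$; this reduces to showing
$$\Delta := \sum_{\substack{\mathcal A\ne\mathcal A'\\ E(\mathcal A)\cap E(\mathcal A')\ne\emptyset}} 2^{-|E(\mathcal A)\cup E(\mathcal A')|} = o\bigl((\E X)^2\bigr).$$
The natural way to organize this is to partition pairs by $s:=|E(\mathcal A)\cap E(\mathcal A')|$. Each shared edge $e\in E_i(\mathcal A)\cap E_j(\mathcal A')$ has its $rk$-vertex support $U(e)$ contained in both $A_1\cup\cdots\cup A_i$ and $A'_1\cup\cdots\cup A'_j$, with $A_i$ and $A'_j$ being two (possibly equal) transversals of $e$. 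This forces substantial vertex overlap between $\bigcup_m A_m$ and $\bigcup_m A'_m$, sharply restricting the number of such pairs.

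The main technical obstacle is the combinatorial accounting: for each $s\ge 1$ one has to bound the number of pairs with $|E(\mathcal A)\cap E(\mathcal A')|=s$ so that the factor $2^{+s}$ is absorbed by the reduction in the pair count. The most delicate regime is when shared edges concentrate at the top levels (large $i,j$, where $|E_i|$ is maximal); I would handle this by organizing the sum according to the ``index pattern'' $\{(i_t,j_t)\}_{t=1}^s$ of shared edges and summing it as a geometric series controlled by the bound on the single-pair factor $2^{-E^*}$ already used for $\E X$. Once $\Delta=o((\E X)^2)$ is established, Janson gives $\Pr(X=0)\to 0$ and an empty family exists a.a.s.
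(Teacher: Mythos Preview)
Your overall framework---count empty families via $X$ and control $\Pr(X=0)$ through a second-moment bound---is exactly the paper's approach, and your first-moment computation (including the exact formula for $|E_i(\mathcal A)|$ and the bound $|E(\mathcal A)|\le c_{k,r}\,l^{r(k-1)+1}$) is correct. The condition $\Delta=o((\E X)^2)$ you isolate is also the right one; whether you call the resulting inequality ``Janson'' or ``Chebyshev'' is immaterial, since both give $\Pr(X=0)\to 0$ under that hypothesis.

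The genuine gap is that you do not actually establish $\Delta=o((\E X)^2)$. Your plan to stratify pairs by the edge-overlap $s=|E(\mathcal A)\cap E(\mathcal A')|$ and then by ``index patterns'' $\{(i_t,j_t)\}$ is left entirely schematic, and the phrase ``summing it as a geometric series controlled by $2^{-E^*}$'' does not point to any concrete estimate: the ratio you must control is $2^{+s}$, and nothing you have written bounds the number of pairs with a given $s$. The observation you make in passing---that a shared edge forces vertex overlap---is in fact the key, but you abandon it rather than exploit it.

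The clean way through (and the paper's route) is to stratify by the \emph{vertex} overlap $m:=\big|\big(\bigcup_i A_i\big)\cap\big(\bigcup_j A'_j\big)\big|$ rather than by $s$. Every edge in $E(\mathcal A)\cap E(\mathcal A')$ is an $r$-tuple of $k$-sets supported entirely inside this $m$-element set, so the crude bound $|E(\mathcal A)\cap E(\mathcal A')|\le m^{r(k-1)+1}$ holds. On the other hand, for fixed $\mathcal A$ the number of $\mathcal A'$ with vertex overlap exactly $m$ is at most $(rl)^{2m}\,n^{rl-m}$. Since $l=C\,(\log_2 n)^{1/(r(k-1))}$ with $C$ small, one has $2^{m^{r(k-1)+1}}=o(n^{m/3})$ uniformly in $m\le rl$, and summing over $m$ gives
\[
\frac{\D X}{(\E X)^2}\le \frac{1}{N}\sum_{m=1}^{rl}(rl)^{2m}n^{rl-m}\,2^{m^{r(k-1)+1}}\le \frac{(r!)^l}{n^{rl}}\sum_{m=1}^{rl}n^{rl-m/3}\to 0.
\]
This replaces your unexecuted ``index-pattern'' argument with a two-line computation.
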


\begin{proof}
Note that the number of ordered families of $l$ pairwise disjoint $r$-sets is $N := \frac{n(n-1)\ldots(n-rl+1)}{(r!)^{l}}$ and the probability that such family is empty in $KG^r_{n,k}(1/2)$ is $\Prb := 2^{-|E(\mathcal A)|}$.
Order such families and denote by $X_i$ the indicator function of the event that the $i$-th family is empty in $KG^r_{n,k}(1/2)$. Let
$X := X_1+\ldots+X_N$ denote the number of empty families in $KG^r_{n,k}(1/2)$.

To prove the lemma, it is sufficient to show that $\frac{\D X}{(\E X)^2} \to 0$ as $n\to\infty$. Indeed, from Chebyshev's inequality
$$
\Pr(X = 0) \le
\Pr(|X - \E X| \ge \E X) \le
\frac{\D X}{(\E X)^2}.
$$

We rewrite the expression  $\frac{\D X}{(\E X)^2}$ in the following way:
$$
\frac{\D X}{(\E X)^2} =
\frac{\sum_{i=0}^N \sum_{j=0}^N (\E X_i X_j - \E X_i \E X_j)}{N^2 \Prb^2} =
\frac1{N^2} \sum_{i=1}^N \sum_{j=0}^N \frac{\E X_iX_j - \Prb^2}{\Prb^2}.
$$

Let us estimate $\frac{\E X_iX_j - P^2}{P^2}$. Let $\mathcal A:=\{A_1, \ldots, A_{l}\}$  and $\mathcal B:=\{B_1, \ldots, B_{l}\}$ be the $i$-th and the $j$-th family, respectively. Then $\E X_iX_j = 2^{- |E(\mathcal A)\cup E(\mathcal B)|}$. Recall that $\Prb = 2^{-|E(\mathcal A) |} = 2^{-|E(\mathcal B)|}$. Then
$$
\frac{\E X_iX_j - \Prb^2}{\Prb^2}
=
\frac{2^{- |E(\mathcal A) \cup E(\mathcal B)|}}{2^{-|E(\mathcal A)|} 2^{-|E(\mathcal B)|}} - 1
=
2^{|E(\mathcal A) \cap E(\mathcal B)|} - 1.
$$

Let us estimate $|E(\mathcal A) \cap E(\mathcal B)|$. Any edge from the intersection is an $r$-tuple of $k$-sets, each of which is a subset of the set $C := (A_1\sqcup\ldots\sqcup A_{l}) \cap (B_1\sqcup\ldots\sqcup B_{l})$. Therefore,  if for some $s\in [l]$ we have $A_s \not\subset C$, then $|E_s(\mathcal A) \cap E(\mathcal B)| = 0$. Otherwise,
$|E_s(\mathcal A) \cap E(\mathcal B)| \le \binom{|C|}{k-1}^r$.
Then
$$
|E(\mathcal A) \cap E(\mathcal B)| \le
\frac{|C|}{r} \binom{|C|}{k-1}^r \le
|C|^{r(k-1)+1}.
$$

Therefore
$$
\frac{\E X_iX_j - \Prb^2}{\Prb^2}
\le
2^{|C|^{r(k-1)+1}} - 1.
$$

Note that, for a given family $\mathcal A$, the number $N_m$ of families $\mathcal B$ such that $|(A_1\sqcup\ldots\sqcup A_{l}) \cap (B_1\sqcup\ldots\sqcup B_{l})| = m$ satisfies $N_m\le (rl)^{2m} n^{rl - m}=n^{rl-(1-o(1))m}$. Due to the condition on $l$, one can choose $C>0$ in the formulation of the lemma such that  for each $m\in[rl]$ we have
\begin{equation}\label{eqhelp} 
2^{m^{r(k-1)+1}} = 
o(2^{\frac{m}3\log_2n}) =
o(n^{m/3}).
\end{equation}

We are ready to conclude the proof.
\begin{align*}
\frac{\D X}{(\E X)^2} =&\
\frac1{N^2} \sum_{i=1}^N\sum_{j=0}^N \frac{\E X_iX_j - \Prb^2}{\Prb^2} \le
\frac1{N^2} \sum_{i=1}^N \sum_{m=1}^{rl} N_m \left(2^{m^{r(k-1)+1}} - 1\right) \\
=&\ \frac1{N} \sum_{m=1}^{rl} N_m \left(2^{m^{r(k-1)+1}} - 1\right) \le
 \frac1N \sum_{m=1}^{rl} n^{rl - (1+o(1))m} 2^{m^{r(k-1)+1}}\\ 
 \overset{\eqref{eqhelp}}{\le}&\
\frac 1N\sum_{m=1}^{rl}  n^{rl - m/3}
\le (1+o(1))\frac{n^{rl-1/4}(r!)^l}{n^{rl}}\to 0\ \ \text{as} \ \ n\to \infty,
\end{align*}
because for some $c>0$ we have $(r!)^l =O\Big(c^{\sqrt[r(k-1)]{\log_2 n}}\Big) =  O\Big(c^{\sqrt{\log_2 n}}\Big) = o(n^{1/4})$.
\end{proof}

\subsection{Proof of the upper bound from Theorem~\ref{thm1} for $k=2$}\label{sec41}

The  proof is similar to the proof of Theorem~\ref{thm2}, but instead of selecting an empty family in one turn, we select independent sets one after another.

Put $h_1 := \sqrt[2]{\frac16 \log_2 n \cdot\log_2 \log_2 n}$, $A :=  (\log_2 n)^{5/6}$ and $N_i :=  \frac{\log_2 n}{2(h_1 + i - 1)}$. Note that
$$
\sum_{i=1}^A N_i \ge
\sum_{i=1}^A \frac{\log_2 n}{2(h_1 + i - 1)} \ge
\frac{\ln 2}{2} \log_2 n \cdot \log_2 \frac{h_1 + A}{h_1}
$$
\begin{equation}
\label{k2_eq_sum}
\ge
(1 + o(1)) 0.11 \cdot \log_2 n \cdot \log_2 \log_2 n >
1.1\binom{h_1}{2}.
\end{equation}

Fix an ordering of $[n]$ and consider $[h_1]$. Using \eqref{k2_eq_sum} and the fact that $N_i = o(h_1)$, it is easy to see that there is a partition $\mathcal A_1, \ldots, \mathcal A_A$ of the family $\binom{[h_1]}{2}$ such that $|\mathcal A_i| \le N_i$ and each family $\mathcal A_i$ is intersecting.
We aim to reorder the elements of $[h_1 + 1, n]$ so that, for each $i\in[A]$, the family $\mathcal A_i \cup \left( \binom{[h_1 + i]}{2} \cap \mathcal S_{h_1+i} \right)$ is independent in $KG_{n,k}(1/2)$. This obviously implies that there is a coloring of $KG_{n,2}(1/2)$ in $n-h_1$ colors.

Assume that we have found suitable elements $h_1 + 1, \ldots, h_1 + i - 1$ and let us estimate the probability $\Prb_i$ that we can choose the element $h_1 + i$. The family $\mathcal A_i \cup \left( \binom{[h_1 + i]}{2} \cap \mathcal S_{h_1+i} \right)$ induces at most $(h_1 + i - 1) N_i$ edges in $KG_{n,2}$, and therefore
$$
\Prb_i \ge
1 - (1 - 2^{-(h_1+i - 1) N_i})^{n - h_1 - i + 1} =
1 - (1 - 2^{-\frac{(1 + o(1))\log_2 n}{2}})^{n - h_1 - i + 1}
$$
$$
\ge
1 - (1 - n^{-2/3})^{n - h_1 - i + 1} \ge
1 - e^{-\frac12 n^{1/3}}.
$$

Then the probability $\Prb$ that we can assign all elements $[h_1 + 1, h_1 + A]$ is at least
$$
\Prb \ge 
1 - \sum_{i=1}^A(1 - \Prb_i) \ge
1- A e^{-\frac12 n^{1/3}} \ge 1-e^{-n^{1/4}}
\to 1.
$$

\section{Conclusion}\label{sec5}
For clarity, all our results are stated and proved for $p=1/2$. With the same analysis, we could extend them to a bigger range of values of $p$. However, we are missing the understanding of the behaviour of $\chi(KG_{n,k}(p))$ in sparser regimes. In particular, what is the threshold for $\chi(KG_{n,k}(p))\ge n/2$?


Thanks to the paper of Kaiser and Stehl\'{\i}k \cite{KS}, we have exponential upper and lower bounds on $\zeta$ from Problem~\ref{pr2}: roughly speaking, $2^k\le \zeta \le 4^k.$ If $\zeta  = 2^{\alpha k}$ for some $1\le \alpha\le 2$, then the methods of this paper and the paper \cite{Kup2} of the second author would allow to show that the correct bound for $k$ for which Theorem~\ref{thm3} holds is $k =(\alpha^{-1}+o(1))\log_2\log_2 n$.  Unfortunately, the result from \cite{KS} does not give anything for the  Conjectures~\ref{conj1} and~\ref{conj2}, and so they remain wide open.

Another natural question is to obtain a structural result for Problem~\ref{pr66} that would work for $n\ll k^2$: i.e., to prove that, depending on $n = n(k)$, some of the colors in the proper colorings into $n-2k+2$ colors must be close (in some of the possible senses) to a star. 

Finally, in the hypergraph case, it would be interesting to close the gap between the lower bound \eqref{eq03} from \cite{Kup2} and the upper bound from Theorem~\ref{thm2}.
\\

{\sc Acknowledgements. } We thank Florian Frick and G\'abor Tardos for useful discussions on Conjecture~\ref{conj2}. Florian pointed out the connection to Sarkaria's inequality.

\begin{small}

\end{small}


\begin{thebibliography}{111}

\bibitem{AH} M. Alishahi and H. Hajiabolhassan, \textit{Chromatic Number of Random Kneser Hypergraphs}, Journal of Combinatorial Theory, Series A  154 (2018), 1--20.

\bibitem{AS} N. Alon and J. Spencer, {\it The probabilistic method}, Wiley--Interscience Series in Discrete Mathematics and Optimization, Second Edition, 2000.

\bibitem{AFL} N. Alon, P. Frankl and L. Lov\'asz, {\it The chromatic number of Kneser hypergraphs}, Transactions of the American Mathematical Society, 298 (1986), N1, 359--370.


\bibitem{BBN} J. Balogh, B. Bollob\'as and B.P. Narayanan, \textit{Transference for the Erd\H os-Ko-Rado theorem}, Forum of Mathematics, Sigma (Vol. 3), 2015. Cambridge University Press.

\bibitem{BKR} A. Bobu, A. Kupriyanov and A. Raigorodskii, \textit{On chromatic numbers of nearly Kneser distance graphs}, Doklady Mathematics 93 (2016), N3, 267--269.

\bibitem{BGPR} L.I. Bogolyubskiy, A.S. Gusev, M.M. Pyaderkin and A.M. Raigorodskii, \textit{The independence numbers and the chromatic numbers of random subgraphs of some distance graphs}, Mat. Sbornik 206 (2015), N10, 3--36;  English transl. in Sbornik Mathematics. 206 (2015), N10, 1340--1374.

\bibitem{BGPR2} L.I. Bogolyubskiy, A.S. Gusev, M.M. Pyaderkin and A.M. Raigorodskii, \textit{Independence numbers and chromatic numbers of random subgraphs in some sequences of  graphs}, Doklady of the Russian Acad. Sci. 457 (2014), N4, 383--387; English transl. in Doklady Mathematics 90 (2014), N1, 462--465.

\bibitem{BNR} B. Bollob\'as, B.P. Narayanan and A.M. Raigorodskii, \textit{On the stability of the Erd\H os-Ko-Rado theorem}, Journal of Combinatorial Theory, Series A 137 (2016), 64--78.

\bibitem{DT} S. Das and T. Tran, \textit{Removal and Stability for Erd\H os--Ko--Rado}, SIAM Journal on Discrete Mathematics 30(2) (2016), 1102--1114.

\bibitem{DK} P. Devlin and J. Kahn, \textit{On "stability" in the Erd\H os--Ko--Rado Theorem}, SIAM Journal on Discrete Mathematics 30(2) (2016), 1283--1289.


\bibitem{EL} D. Ellis and N. Lifshitz, {\it On the union of intersecting families}, Combinatorics, Probability and Computing 28 (2019), N6, 826--839.

\bibitem{EKL} D. Ellis, N. Keller and N. Lifshitz, {\it Stability versions of Erd\H os--Ko--Rado type theorems via isoperimetry}, Journal of the European Mathematical Society 21 (2019), N12, 3857--3902.

\bibitem{E} P. Erd\H os, \textit{A problem on independent r-tuples}, Ann. Univ. Sci. Budapest. 8 (1965), 93--95.

\bibitem{Erd1} P. Erd\H os, \textit{Problems and results in combinatorial analysis}, Colloq. Internat. Theor. Combin. Rome (1973), Acad. Naz. Lincei, Rome  (1976), 3--17.

\bibitem{EKR} P. Erd\H os, C. Ko and R. Rado, \textit{Intersection theorems for systems of finite sets}, The Quarterly Journal of Mathematics 12 (1961), N1, 313--320.

\bibitem{F4} P. Frankl, \textit{Improved bounds for Erd\H os' Matching Conjecture}, Journal of Combinatorial Theory, Series A 120 (2013), 1068--1072.

\bibitem{FF84} P. Frankl and Z. Füredi, {\it Extremal Problems concerning Kneser Graphs}, Journal of Combinatorial Theory, Series B 40 (1986), 270--285.

\bibitem{FK6} P. Frankl and A. Kupavskii, \textit{Two problems on matchings in set families --- in the footsteps of Erd\H os and Kleitman}, Journal of Combinatorial Theory Series B 138 (2019), 286-313.


\bibitem{FK7} P. Frankl and A. Kupavskii, \textit{Families with no $s$ pairwise disjoint sets}, Journal of the London Mathematical Society 95 (2017), N3, 875--894.

\bibitem{FK13} P. Frankl and A. Kupavskii, {\it The Erd\H os Matching Conjecture and concentration inequalities}, 	arXiv:1806.08855

\bibitem{HM} A.J.W. Hilton and E.C. Milner, \textit{Some intersection theorems for systems of finite sets}, Quart. J. Math. Oxford 18 (1967), 369--384.

\bibitem{KS} T. Kaiser and M. Stehl\'{\i}k, \textit{Edge-critical subgraphs of Schrijver graphs II: The general case}, arXiv:2007.09204.

\bibitem{KR} S. Kiselev and A. Raigorodskii, {\it On the chromatic number of a random subgraph of the Kneser graph}, Doklady Mathematics 96 (2017), N2, 475--476.

\bibitem{KKKC} S. Kiselev, A. Kupavskii, {\it Trivial colors in colorings of Kneser graphs}, 	arXiv:2012.14528

\bibitem{Knes} M. Kneser, \textit{Aufgabe 360}, Jahresbericht der Deutschen Mathematiker-Vereinigung 2 (1955), 27.



\bibitem{Kup} A. Kupavskii, \textit{On random subgraphs of Kneser and Schrijver graphs}, Journal of Combinatorial Theory, Series A  141 (2016), 8--15.

\bibitem{Kup2} A. Kupavskii, \textit{Random Kneser graphs and hypergraphs}, The Electronic Journal of Combinatorics (2018), P4-52.

\bibitem{Kup3} A. Kupavskii, {\it  Diversity of uniform intersecting families}, European Journal of Combinatorics, 74 (2018), 39--47.


\bibitem{Lova} L. Lovasz, \textit{Kneser's conjecture, chromatic number, and homotopy}, Journal of Combinatorial Theory, Series A 25 (1978), N3, 319--324.

\bibitem{Mat} J. Matou\v sek, \textit{Using the Borsuk-Ulam theorem}, Springer, 2003.

\bibitem{MatKne} J. Matou\v sek, \textit{A combinatorial proof of
Kneser's conjecture}, Combinatorica 24 (2004), 163--170.

\bibitem{Pyad} M. Pyaderkin, {\it On the stability of some Erd\H os--Ko--Rado type results} Discrete Mathematics 340 (2017), N4, 822--831.

\bibitem{PR} M. Pyaderkin and A. Raigorodskii, {\it On random subgraphs of Kneser graphs and their generalizations},  Doklady Mathematics 94 (2016), N2, 547--549.

\bibitem{R} V. R\"odl, \textit{On a Packing and Covering Problem}, European Journal of Combinatorics, 6.1 (1985), 69--78.


\bibitem{Sch} A. Schrijver, \textit{ Vertex-critical subgraphs of Kneser graphs}, Nieuw Arch. Wiskd., III. Ser., 26 (1978), 454--461.

\end{thebibliography}
\end{document}